
\documentclass[preprint,12pt]{elsarticle} 
\usepackage{amsmath,amsthm,amsfonts,amssymb,graphicx}
\usepackage{enumerate}
%
\newtheorem{theorem}{Theorem}[section]
\newtheorem{lemma}{Lemma}[section]
\newtheorem{proposition}{Proposition}[section]
\newtheorem{corollary}{Corollary}[section]
\newtheorem{definition}{Definition}[section]
\newtheorem{remark}{Remark}[section]
\newtheorem{example}{Example}[section]
\newtheorem{problem}{Problem}[section]
%
%
 \newcommand{\N}{\mathbb{N}}                                            
 \newcommand{\R}{\mathbb{R}}                                            
 \newcommand{\eps}{\varepsilon}                                         
 \newcommand{\nor}[1]{\left\Vert#1\right\Vert}                          
 \newcommand{\abs}[1]{\left\vert#1\right\vert}                          
 \newcommand{\er}{\mathcal{R}}				                            
 
 \newcommand{\de}[1]{\textit{#1}}                             			

\journal{XXXX}

\begin{document}

\begin{frontmatter}



\title{Existence and uniqueness to several kinds of differential equations using the Coincidence Theory}


\author[label1]{D. Ariza-Ruiz}
\ead{dariza@us.es}

\author[label2]{J. Garcia-Falset}
\ead{garciaf@uv.es (the corresponding author)}

\address[label1]{Departamento de An\'alisis Matem\'atico, Universidad de Sevilla, Apdo. 1160, 41080-Sevilla, Spain}

\address[label2]{Departament d'An\`{a}lisi Matem\`{a}tica, Universitat de Val\`{e}ncia, Dr. Moliner 50, 46100, Burjassot, Val\`{e}ncia, Spain}

\begin{abstract}
The purpose of this article is to study the existence of a coincidence point for two mappings defined on a nonempty set and taking values on a Banach space using the fixed point theory for nonexpansive mappings. Moreover, this type of results will be applied to obtain the existence of solutions for some classes of ordinary differential equations.
\end{abstract}

\begin{keyword}
differential equations, fractional derivative, coincidence problem, fixed point, Ulam-Hyers stability.

 MSC: 34A10, 34A08, 47H09

\end{keyword}

\end{frontmatter}


\section{Introduction}
\label{}
Some nonlinear problems arising from most areas of the applied sciences can be formulated under the mathematical point of view involving the study of solutions of equations of the form
\begin{equation}\label{e11}
{\rm Find} \ u\in X \ {\rm such \ that} \ T(u)=S(u),
\end{equation}
where $X$ is a nonempty set, $Y$ is a Banach space, and $T,S:X\to Y$ are two mappings.
\par\medskip
It is well known that the existence of  a solution to problem (\ref{e11}) is, under appropriate conditions, equivalent to the existence of a fixed point for a certain mapping. In this sense, R. Machuca~\cite{Ma.67} proved a coincidence theorem by using Banach's contraction  principle. This same principle was used by K. Goebel~\cite{Go.68} to obtain a similar result under much weaker assumptions, Goebel's theorem allowed the author to give conditions for existence of solutions of the differential equation $x'(t)=f(t,x(t)).$ Recently, several extensions of the above results due to Machuca and Goebel, as well as some application to the existence of solutions for various types of functional equations, have been obtained using generalizations of Banach's principle, for instance see~\cite{Bu.01, GF-Ml.13, OM.13}. On the other hand, in 1977 Gaines and Mawhin~\cite{Gaines} introduced coincidence degree theory. The main goal for them is to search for the existence of solutions of problem (\ref{e11}) in some bounded and open set $X$ in some Banach space for $T$ being a linear operator and $S$ a nonlinear operator using Leray-Schauder degree theory for condensing mappings (see~\cite{GF-HL-Ml.13, Sirma} to find some sharpening results).
\par\medskip
For more than forty years, the study of the existence of fixed points for  nonexpansive mappings has been an important object of research in Nonlinear Functional Analysis, especially, the existence of fixed points for this kind of mappings defined on a closed convex and bounded subset of a Banach space into itself. This theory  was started in 1965 by Browder \cite{Br.65}, G\"{o}hde \cite{Go.65} and Kirk \cite{K.65}. This study was mainly based upon the geometry of the ambient Banach spaces. Thus, in order to simplify the statement, it is usual to say that a Banach space $X$ has the fixed point property for nonexpanisve mappings (FPP for short) whenever each nonexpansive sefmapping of each nonempty closed convex bounded subset of $X$ has a fixed point. Kirk's result says that  those reflexive Banach spaces with normal structure have the FPP. In particular, uniformly convex Banach spaces have normal structure (see \cite{KS.01} for more information).
Nevertheless, it is an easy task to find fixed point free nonexpansive sefmappings defined on  closed convex and unbounded  domains even when the ambient Banach space enjoys the FPP. To solve this problem,  for instance in \cite{GF-ll.10},  the authors considered several fixed point results for nonexpansive mappings with unbounded domains satisfying additional asymptotic contractive-type conditions in terms of a function $G:X\times X\rightarrow\mathbb{R}$ under the following assumptions:
\begin{itemize}
\item[(a)] $G(\lambda x,y)\leq G(x,y)$ for any $x,y\in X$ and $\lambda>0$;
\item[(b)] there exists $S>0$ such that $0< G(x,x)$ for any $x\in X$with $\|x\|\geq S$;
\item[(c)] $G(x+y,z)\leq G(x,z)+G(y,z)$ for any $x,y,z\in X$;
\item[(d)] for each $y\in X$ there exists $t>0$, such that if $\|x\|\geq t,$ then $|G(y,x)|<G(x,x)$.
\end{itemize}
The use of a function $G$ fulfilling the above assumptions yields the following result:
Let $X$ be a Banach space with the FPP and let $C$ be  a closed convex unbounded subset of $X$, If $T:C\rightarrow C$ is a nonexpansive mapping satisfying that there exists $R>0$ such that for every $x\in C$ with $\|x\|\geq R$ the inequality $G(Tx,x)\leq G(x,x)$ holds, then $T$ has a fixed point in $C$. Recently, it has been showed in \cite{G-O} that, if $C$ is a closed convex and unbounded subset of a Banach space $(X,\|\cdot\|)$ and if $T:C\rightarrow C$ is a nonexpansive mapping, then the following statements are equivalent:
\begin{enumerate}
\item There exist $x_0\in C$, $R>0$ and a function $G$ satisfying conditions (a)--(d) such that $G(Tx-x_0,x-x_0)\leq G(x-x_0,x-x_0)$ for all $x\in C$,
\item There exist $x_0\in C$ and $R>0$ such that $Tx-x_0\neq \lambda(x-x_0)$ for all $\lambda>1$ and for all $x\in C$ with $\|x-x_0\|\geq R.$
\end{enumerate}
This fact is one of the main reasons for which our results will be expressed using Leray-Schauder condition.
\par\medskip
On the other hand, in 1981, D. Alspach \cite{Al.81} found a fixed point free nonexpansive mapping leaving invariant a weakly compact convex subset of $L^1[0,1]$, this example shows that there are Banach spaces without the FPP, \cite[Chapter 2]{KS.01}  collects  together examples of fixed point free nonexpansive mappings in a variety of Banach spaces. Therefore, if we wish to obtain positive fixed point results in Banach spaces without the FPP, it will be necessary to add some additional condition on the mapping. In this sense, it is known \cite[Corollary 3.3]{GF-Om.13} that if $C$ is a bounded closed and convex nonempty subset of a Banach space $X$ and $T:C\rightarrow C$ is a nonexpansive mapping such that $I-T$ is $\varphi$-expansive (see definition below), then $T$ has a unique fixed point.
\par\medskip
In this paper, we obtain several versions of the coincidence problem (\ref{e11}) when the Banach space $Y$ has the FPP and also when it fails to have the FPP by using the fixed point theory for nonexpansive mappings. These results allow us to study the existence (and uniqueness) of solutions for the following classes of differential equations:
\par\medskip
\begin{problem} \label{p1} \rm A three-point boundary value problem of second order:
$$
\left\{
\begin{array}{ll}
x''(t)-g\big(t,x(t),x'(t),x''(t)\big)=0 & \quad\text{for } \ {\rm a.e.} \ 0\leq t\leq 1, \\
& \\
x(0)=0,\qquad x'(1)=\delta x'(\eta), & \\
\end{array}
\right.
$$
where $g:[0,1]\times \R^3\to \R$ is a continuous function, $\delta\neq1$ and $\eta\in(0,1)$.
\par\medskip
The multi-point boundary value problems for differential equations arise from many fields of applied mathematics and physics. This kind of problems for linear second order ordinary differential equations was initiated in 1987 by II'in and Moiseev, and motivated by the work of Bitsadze and Samarski on non-local linear elliptic boundary problems. See~\cite{Li.Yu.02} and the references therein.
\end{problem}
\par\medskip
\begin{problem}\label{p2}\rm  A general differential equation with homogeneous Dirichlet condition:
$$\left\{
\begin{array}{l}
	A(u''(t))-\sin\big(u(t)\big)=g(t),\qquad\mbox{for }t\in[0,1]	\\[1em]
	u(0)=0,\quad u(1)=0,
\end{array}\right.$$
where the fixed function $g\in\mathcal{C}[0,1]$ is called the driving force, and $A:\R\to\R$ is a certain known function.
\par\medskip
This type of equations is motivated by the study of the forced oscillations of finite amplitude of a pendulum in the absence of a damping force, see~\cite[Section~4.7]{Tri.85}.
\end{problem}
\par\medskip
\begin{problem}\label{p3}\rm A Cauchy problem with  nonlocal initial data for fractional differential equations of Caputo type:
$$
\left\{
\begin{array}{l}
^cD^q x(t)=f(t,x(t))\qquad\mbox{in }\R_+, 	\\[1em]
x(0)=x_0+g(x),
\end{array}
\right.
$$
where $f\in\mathcal{C}(\R_+\times\R)$, $0<q<1$, and  $^cD^qx$ is the Caputo fractional derivative.
\par\medskip
Fractional derivatives provide an excellent tool for description of memory and hereditary properties of various materials and processes. This is one of the main advantage of fractional differential equations in comparison with classical integer-order models. A vast collection of real-world problems is drawn form fractional equations of Caputo type, see \cite{AN.09} and the references therein.
\end{problem}
%
%
\section{Notations and Preliminaries}
%
%
As it is usual, we shall denote by $\mathcal{C}(I)$  the set of continuous functions of $I$ into $\R$, where $I$ is a subset of $\R$ or $\R^2$.
We shall use $\mathcal{AC}[0,1]$ to denote the set of all real absolutely continuous functions on $[0,1]$.
We shall also use the usual notation of the Sobolev spaces $W^{1,2}[0,1]$ and $W^{2,2}[0,1]$ defined by
$$W^{1,2}[0,1]:=\Big\{x:[0,1]\to\R\;|\;x\in\mathcal{AC}[0,1] \mbox{ with }x'\in L^2[0,1]\Big\},$$
and
$$W^{2,2}[0,1]:=\Big\{x:[0,1]\to\R\;|\;x,x'\in\mathcal{AC}[0,1] \mbox{ with }x''\in L^2[0,1]\Big\},$$
where $L^2[0,1]$ is the  classical Hilbert space endowed with its  usual norm
$$\nor{x}_2:=\left[\int_0^1u(t)^2ds\right]^{\frac{1}{2}}.$$

It is well known that $W^{2,2}[0,1]\subseteq \mathcal{C}^{1}[0,1]:=\{u:[0,1]\to\R\;:\;u'\in\mathcal{C}[0,1]\}$ with continuous injection.
\par\medskip
Let $D$ be a nonempty subset of a normed space $(Y,\nor{\cdot})$.
A mapping $T:D\to Y$ is said to be \de{nonexpansive} if $\nor{Tx-Ty}\leq \nor{x-y}$ for all $x,y\in D$. Recall that a Banach space $Y$ satisfies the \de{Fixed Point Property} (the FPP, in short) whenever each nonexpansive self-mapping of each nonempty closed convex bounded subset of $Y$ has a fixed point.
\par\medskip
Let $D$ be a nonempty subset of $Y$. A mapping $T:D\rightarrow Y$ is said to have \de{the range condition} if
\begin{equation}\label{eq:Cond.Rango}
 \overline{D}\subseteq\bigcap_{\lambda>0}R(I+\lambda\,T),
\end{equation}
where $R(I+\lambda\,T):=\{y\in Y: \ \exists x\in D: \ y=(I+\lambda T)(x)\}$ and $I$ denotes the identity mapping. The following result can be found in~\cite{Ma}.

\begin{lemma}\label{lemma:Cond.Rango}
Let $C$ be a closed convex subset of a Banach space $Y$ and let $h$ be a nonexpansive mapping of $C$ into itself. Then $I-h$ has the range condition.
\end{lemma}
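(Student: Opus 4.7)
The plan is to reformulate the range condition as a fixed point problem for a strict contraction on $C$ and then invoke the Banach contraction principle. Fix $y\in C$ (note that $\overline{C}=C$ since $C$ is closed) and $\lambda>0$. With $T:=I-h$, producing $x\in C$ with $y=(I+\lambda T)x=(1+\lambda)x-\lambda h(x)$ is equivalent to solving
\begin{equation*}
x=\frac{1}{1+\lambda}\,y+\frac{\lambda}{1+\lambda}\,h(x).
\end{equation*}

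Accordingly, I would define the auxiliary map $F:C\to Y$ by
\begin{equation*}
F(x):=\frac{1}{1+\lambda}\,y+\frac{\lambda}{1+\lambda}\,h(x).
\end{equation*}
First I would check that $F(C)\subseteq C$: since $y\in C$, $h(x)\in C$, the numbers $\tfrac{1}{1+\lambda}$ and $\tfrac{\lambda}{1+\lambda}$ are nonnegative and sum to $1$, and $C$ is convex, $F(x)$ is a convex combination of two elements of $C$, hence belongs to $C$.

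Next I would check that $F$ is a strict contraction on $C$. For $x_1,x_2\in C$, using that $h$ is nonexpansive,
\begin{equation*}
\nor{F(x_1)-F(x_2)}=\frac{\lambda}{1+\lambda}\,\nor{h(x_1)-h(x_2)}\leq \frac{\lambda}{1+\lambda}\,\nor{x_1-x_2},
\end{equation*}
and $\tfrac{\lambda}{1+\lambda}<1$. Since $C$ is a closed subset of the Banach space $Y$, it is itself a complete metric space, so Banach's contraction principle supplies a (unique) fixed point $x\in C$ of $F$. Unwinding the definition, this $x$ satisfies $(1+\lambda)x-\lambda h(x)=y$, i.e.\ $y=(I+\lambda T)(x)\in R(I+\lambda T)$. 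As $\lambda>0$ and $y\in C$ were arbitrary, this gives $\overline{C}\subseteq\bigcap_{\lambda>0}R(I+\lambda T)$.

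There is no real obstacle here; the only point to be careful about is the reduction of $y=(I+\lambda(I-h))(x)$ to a convex combination so that $F$ automatically maps $C$ into itself, which is precisely where the convexity hypothesis on $C$ is used. Closedness of $C$ is used to guarantee completeness so that the Banach contraction principle applies.
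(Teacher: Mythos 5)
Your proof is correct: the reduction of $y=(I+\lambda(I-h))(x)$ to the fixed point equation $x=\tfrac{1}{1+\lambda}y+\tfrac{\lambda}{1+\lambda}h(x)$, the use of convexity to get $F(C)\subseteq C$, and the Banach contraction principle on the complete set $C$ are exactly the standard argument. The paper itself gives no proof of this lemma (it only cites Martin's work), and your argument is precisely the one found there, so there is nothing to add.
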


Recall Goebel's theorem~\cite{Go.68} which will be generalized in Section~\ref{sec:Goebel_Thm}.

\begin{theorem}
Let $X$ be an arbitrary nonempty set and let $(Y,\rho)$ be a metric space. Suppose that $S,T:X\to Y$ are two mappings such that $T(X)$ is complete and $S(X)\subseteq T(X)$. If there exists a constant $k\in [0,1)$ such that $\rho(T(x),T(y))\leq k\rho(T(x),T(y))$ for all $x,y\in X$, then the coincidence problem~\eqref{e11} has a solution.
\end{theorem}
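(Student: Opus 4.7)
The plan is to reduce the statement to Banach's contraction principle applied to a mapping defined on $T(X)$. Note that the hypothesis as stated has a typo; the intended contractive condition is $\rho(S(x),S(y)) \leq k\,\rho(T(x),T(y))$ for all $x,y\in X$ (otherwise $T$ would be forced to be constant). I will work with this corrected form.

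First I would define an auxiliary self-mapping $f:T(X)\to T(X)$ by the formula $f(T(x)):=S(x)$. The first thing to verify is that this recipe actually defines a function: if $T(x)=T(y)$, then the contractive estimate gives $\rho(S(x),S(y))\leq k\,\rho(T(x),T(y))=0$, so $S(x)=S(y)$ and $f$ is well-defined. The hypothesis $S(X)\subseteq T(X)$ guarantees that $f$ takes values in $T(X)$, so $f$ is indeed a self-mapping of $T(X)$.

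Next I would check that $f$ is a Banach contraction on $T(X)$ with constant $k$: given any two points $y_1=T(x_1)$ and $y_2=T(x_2)$ of $T(X)$,
\begin{equation*}
\rho\bigl(f(y_1),f(y_2)\bigr)=\rho\bigl(S(x_1),S(x_2)\bigr)\leq k\,\rho\bigl(T(x_1),T(x_2)\bigr)=k\,\rho(y_1,y_2).
\end{equation*}
Since $T(X)$ is complete by hypothesis and $k\in[0,1)$, the Banach contraction principle produces a unique fixed point $y^\ast\in T(X)$ of $f$.

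Finally I would pull this fixed point back to $X$. Writing $y^\ast=T(u)$ for some $u\in X$ (which exists by definition of $T(X)$), the fixed point equation $f(y^\ast)=y^\ast$ becomes $S(u)=f(T(u))=T(u)$, so $u$ solves~\eqref{e11}. There is no serious obstacle here: the argument is a clean reduction to Banach's theorem, and the only subtle point is the well-definedness of $f$, which is precisely where the contractive inequality is used in a nontrivial way (rather than just for iteration).
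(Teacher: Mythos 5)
Your proof is correct, and you are right to flag the typographical slip in the statement: as written the inequality would force $T$ to be constant, and the intended hypothesis is $\rho(S(x),S(y))\leq k\,\rho(T(x),T(y))$. Note, however, that the paper does not prove this theorem at all --- it is recalled from Goebel's 1968 note as background, to be generalized later --- so there is no in-paper proof to match line by line. What the paper does prove is a Banach-space generalization (Theorem~\ref{thm:Coincidence.Contraction}, whence Corollary~\ref{cor:Coincidence.Contraction}), and its route is quite different from yours: it passes to the quotient of $X$ by the relation $Tx=Ty$, and feeds the induced maps into Theorem~\ref{thm:Gen.Theorem}, whose proof uses the factorized map $h=S\circ T^{-1}$ on $T(X)$ together with the full nonexpansive-mapping apparatus --- the range condition of Lemma~\ref{lemma:Cond.Rango} to produce an almost-fixed-point sequence, a Leray--Schauder boundary condition to keep it bounded, and $\varphi$-expansiveness of $T-S$ to make it Cauchy. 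Your argument shares the one structural idea that matters (single-valuedness of $S\circ T^{-1}$ on $T(X)$, which is exactly where the contractive inequality does nontrivial work) but then closes with a direct appeal to the Banach contraction principle. That is the classical and more economical route: it works in an arbitrary metric space $Y$ with $T(X)$ merely complete, needing no linear or convex structure, whereas the paper's heavier machinery exists to handle the strictly weaker Geraghty-type and nonexpansive-plus-$\varphi$-expansive hypotheses, for which a one-line reduction to Banach's theorem is unavailable.
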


There is a number of generalizations of metric spaces. One such generalization is semi-metric spaces initiated by Fr\'{e}chet~\cite{Fr.06}, Menger~\cite{Me.28}, Chittenden~\cite{Ch.17} and Wilson~\cite{Wi.31}.

\begin{definition}
Let $X$ be a nonempty set. A semi-metric is a nonnegative real function $d:X\times X\rightarrow\mathbb{R}_+$ such that
\begin{itemize}
\item[(a)] $d(x,y)=0$ if, and only if, $x=y$;
\item[(b)] $d(x,y)=d(y,x)$ for all $x,y\in X$.
\end{itemize}
\end{definition}
As expected by $(X,d)$ we denote a nonempty set $X$ equipped with a semi-metric $d$ on $X$ and call it a semi-metric space. Note that every metric space (or, more general, every quasi-metric space~\cite{He.01}) is semi-metric but not conversely.
\par\medskip
Throughout this paper, we shall denote by $\mathcal{F}$ the family of all functions $f:\R_+ \to \R_+$ such that
\begin{itemize}
\item[($P_1$)] $f(r) = 0$ if and only if $r = 0$,
\item[($P_2$)] $f$ is nondecreasing.
\end{itemize}

\begin{definition}\label{Defphiexp}
Let $(X,d)$ and $(Y,\rho)$ be two semi-metric spaces. A mapping $A: X\to Y$ is said to be $\varphi$-expansive if there exists a function $\varphi\in\mathcal{F}$ such that
$$\rho(Ax,Ay) \geq \varphi(d(x,y))
\qquad\text{for all }x,y\in X.$$
\end{definition}


Finally, recall Bellman's inequality which will be used in the last part of this work (we refer to~\cite[Chapter~XII]{MiPeFi.91} for a book treatment).

\begin{lemma}
If $x:[0,T]\to\R_+$ is a continuous function, $x_0\in\R$, $k\in L^1_{\mbox{loc}}([0,T],\R_+)$, and
$$
x(t)\leq x_0+\int_0^t k(s)\,x(s)\,ds
$$
for each $t\in[0,T]$, then $x(t)\leq x_0\,\mbox{Exp}\big(\int_0^tk(s)\,ds\big)$ for all $t\in[0,T]$.
\end{lemma}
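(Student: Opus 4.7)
The plan is to use an integrating-factor argument. First I would introduce the auxiliary function
$$
y(t) := x_0 + \int_0^t k(s)\,x(s)\,ds, \qquad t\in[0,T],
$$
so that the hypothesis reads simply $x(t)\le y(t)$ on $[0,T]$, with $y(0)=x_0$. Since $x$ is continuous and $k\in L^1_{\mathrm{loc}}([0,T],\R_+)$, the product $k\,x$ is locally integrable, hence $y$ is absolutely continuous on $[0,T]$ and satisfies $y'(t)=k(t)\,x(t)$ almost everywhere. Because $k(t)\ge 0$ and $x(t)\le y(t)$, this yields the almost-everywhere differential inequality
$$
y'(t) \le k(t)\,y(t).
$$

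Next, motivated by the model ODE $z'=kz$, I would introduce the ``integrating factor'' $E(t):=\exp\!\bigl(-\int_0^t k(s)\,ds\bigr)$ and set $z(t):=y(t)\,E(t)$. The function $t\mapsto \int_0^t k(s)\,ds$ is absolutely continuous since $k\in L^1_{\mathrm{loc}}$; composing with the smooth function $r\mapsto e^{-r}$, which is Lipschitz on the compact range, shows that $E$ is absolutely continuous, and hence so is $z$. A direct computation almost everywhere gives
$$
z'(t) \;=\; E(t)\,\bigl(y'(t)-k(t)\,y(t)\bigr)\;\le\;0,
$$
so that $z$ is nonincreasing on $[0,T]$. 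Therefore $z(t)\le z(0)=y(0)=x_0$ for every $t$, and multiplying by $\exp\!\bigl(\int_0^t k(s)\,ds\bigr)$ produces $y(t)\le x_0\exp\!\bigl(\int_0^t k(s)\,ds\bigr)$. Combined with $x(t)\le y(t)$, this is exactly the asserted bound.

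The main technical point to be careful about is the passage from the almost-everywhere inequality $z'\le 0$ to the pointwise conclusion $z(t)\le z(0)$: this is valid precisely because $z$ has been shown to be absolutely continuous, so it is the integral of its a.e.\ derivative. The sign of $x_0$ plays no role in the argument; if $x_0<0$ the bound is, of course, still correct but less informative. An alternative route would be to iterate the inequality $x(t)\le x_0+\int_0^t k(s)\,x(s)\,ds$ into itself and recognise the partial sums of the exponential series, but the integrating-factor method above is the shortest and matches the way the inequality is typically applied in the differential-equation arguments later in the paper.
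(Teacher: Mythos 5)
Your integrating-factor argument is correct: the auxiliary function $y$ is absolutely continuous with $y'=kx\le ky$ a.e., the factor $E(t)=\exp\bigl(-\int_0^t k(s)\,ds\bigr)$ is absolutely continuous and bounded, so $z=yE$ is absolutely continuous with $z'\le 0$ a.e., and the pointwise monotonicity of $z$ follows exactly because $z$ is the integral of its a.e.\ derivative --- the one technical point you rightly flag. There is nothing in the paper to compare against: the lemma is stated as a recalled classical result (Bellman's inequality) with a citation to Mitrinovi\'{c}--Pe\v{c}ari\'{c}--Fink and no proof is given, and what you have written is the standard proof of it.
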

%
%
%
\section{Coincidence problem assuming the FPP}
%
%
In this section we present a positive result to problem~\eqref{e11} when $Y$ is a Banach space enjoying  the FPP. Later on, we will apply such result to discuss the existence of a solution to Problem \ref{p1}.

\begin{theorem}\label{thm:FPP}
Let $X$ be a nonempty set and let $(Y,\nor{\cdot})$ be a Banach space enjoying the FPP. If $T,S:X\to Y$ are two mappings satisfying:
\begin{enumerate}
\item[($i$)] $T(X)$ is a closed convex subset of $Y$,
\item[($ii$)] $S(X) \subseteq T(X)$ and $\nor{S(x)-S(y)}\leq\nor{T(x)-T(y)}$ for all $x,y\in X$,
\item[($iii$)] there exist $x_0\in X$ such that
$$\nor{T(x)-T(x_0)}\geq R\Rightarrow
S(x)-T(x_0)\neq\lambda(T(x)-T(x_0)) \quad\mbox{for all }\lambda>1,$$
\end{enumerate}
then there exists at least one $x$ in $X$ such that $Tx=Sx$.
\end{theorem}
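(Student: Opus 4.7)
The natural strategy is to transport the problem to the closed convex set $T(X)\subseteq Y$ and reduce it to a fixed point problem for a nonexpansive selfmapping there.

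Specifically, I would define a mapping $h: T(X) \to T(X)$ by the rule $h(T(x)) := S(x)$. The first thing to check, and the only mildly delicate point of the setup, is that $h$ is well defined: if $T(x_1) = T(x_2)$, hypothesis (ii) gives $\nor{S(x_1)-S(x_2)} \leq \nor{T(x_1)-T(x_2)} = 0$, so $S(x_1) = S(x_2)$. The inclusion $S(X)\subseteq T(X)$ from (ii) then ensures that $h$ indeed sends $T(X)$ into itself, and the same inequality in (ii) rewrites exactly as $\nor{h(u)-h(v)} \leq \nor{u-v}$ for all $u,v\in T(X)$, i.e.\ $h$ is nonexpansive. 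By hypothesis (i), the domain $T(X)$ is closed and convex.

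Now the proof splits naturally into two cases. If $T(X)$ is bounded, then the FPP of $Y$ applies directly to the nonexpansive selfmap $h$ of the nonempty closed convex bounded set $T(X)$, yielding some $u^\ast = T(x^\ast) \in T(X)$ with $h(u^\ast) = u^\ast$, i.e.\ $S(x^\ast) = T(x^\ast)$. If $T(X)$ is unbounded, I would invoke the fixed point theorem for nonexpansive mappings with unbounded domain discussed in the introduction: setting $u_0 := T(x_0)\in T(X)$, condition (iii) translates verbatim into the Leray--Schauder type statement
\[
\nor{u-u_0}\geq R \; \Longrightarrow \; h(u)-u_0 \neq \lambda(u-u_0) \quad \text{for all } \lambda>1,\ u\in T(X).
\]
By the equivalence in the result from \cite{G-O} recalled in the introduction, this is precisely the condition that guarantees the existence of a function $G$ satisfying (a)--(d) with $G(h(u)-u_0,u-u_0)\leq G(u-u_0,u-u_0)$, and hence, by the unbounded-domain fixed point theorem from \cite{GF-ll.10}, $h$ has a fixed point $u^\ast \in T(X)$. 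Writing $u^\ast = T(x^\ast)$ for some $x^\ast\in X$ again yields $S(x^\ast) = h(T(x^\ast)) = T(x^\ast)$, which is the desired coincidence point.

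The main conceptual step is recognizing that (iii) is engineered to be the Leray--Schauder condition for the induced map $h$ on $T(X)$, so that the two hypotheses of the unbounded-domain nonexpansive fixed point theorem (FPP of $Y$ plus the Leray--Schauder condition at some base point $u_0$) are exactly what (i)--(iii) encode. Apart from this translation, the only verification that requires care is the well-definedness of $h$, and the two cases bounded/unbounded can in fact be treated uniformly, since for bounded $T(X)$ condition (iii) is vacuously satisfied for $R$ large and the abstract theorem still applies.
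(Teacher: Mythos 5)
Your proposal is correct, and it opens exactly as the paper does: the induced map $h$ on $T(X)$ given by $h(T(x))=S(x)$, its well-definedness via (ii), its nonexpansiveness, and the observation that (iii) is verbatim the Leray--Schauder condition for $h$ at the base point $u_0=T(x_0)$. Where you diverge is in how the fixed point of $h$ is produced. You black-box the unbounded-domain machinery recalled in the introduction: the equivalence from \cite{G-O} between the Leray--Schauder condition and the existence of a function $G$ satisfying (a)--(d), followed by the fixed point theorem of \cite{GF-ll.10}. The paper instead gives a self-contained argument: by Lemma~\ref{lemma:Cond.Rango} the operator $I-h$ has the range condition, so one can solve $y_0=\big(I+n(I-h)\big)(y_n)$ and obtain an almost fixed point sequence $(y_n)$; condition (iii) forces $\nor{y_n-y_0}\le R$, since a violation would produce from \eqref{eq:thm:FPP:1} exactly the forbidden relation $h(y_n)-y_0=\tfrac{n+1}{n}(y_n-y_0)$ with $\lambda=\tfrac{n+1}{n}>1$; then $C=\{y\in T(X):\limsup_n\nor{y_n-y}\le\varrho\}$ is a nonempty closed convex bounded $h$-invariant set to which the FPP of $Y$ applies directly. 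Your route is shorter but rests on the cited theorems (plus the routine translation of the result of \cite{GF-ll.10}, stated at the origin, to the base point $u_0$), whereas the paper's route re-proves the needed special case from scratch; more importantly, the a.f.p.-sequence construction the paper sets up here is reused almost word for word in Theorem~\ref{thm:Gen.Theorem}, where the FPP is unavailable and the same sequence is instead shown to be Cauchy via $\varphi$-expansiveness. Your bounded/unbounded case split is fine (the bounded case is indeed immediate from the FPP), so I see no gap --- only a less self-contained second half.
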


\begin{proof}
Consider $h:T(X)\to2^{T(X)}$ given by $h(x)=S(T^{-1}(x))$, where $T^{-1}(x):=\big\{y\in X:T(y)=x\big\}$. Notice that the mapping $h$ is single valued. Indeed, if $u,v\in h(x)$   since by definition there exist $y,z\in T^{-1}(x)$ such that $u=S(y)$ and $v=S(z)$. Assumption ($ii$) yields that
$$\nor{u-v}=\nor{S(y)-S(z)}\leq\nor{T(z)-T(y)}=\nor{x-x}=0,$$
which implies that $u=v$. Therefore, $h:T(X)\to T(X)$.
\par\medskip
Now, we are going to see that $h$ is nonexpansive. Indeed, fixed $x,y\in T(X)$, there exist $u,v\in X$ with $u\in T^{-1}(x)$ and $v\in T^{-1}(y)$. Then, from assumption~($ii$),
$$\nor{h(x)-h(y)}\leq \nor{S(u)-S(v)}\leq \nor{T(u)-T(v)}=\nor{x-y}.$$
Let us show that $h$ has an almost fixed point (a.f.p. in short) sequence, that is, there exists a sequence $(y_n)$ in $T(X)\subseteq Y$ such that $\lim_{n\to\infty}(y_n-h(y_n))=0$. Let $y_0=T(x_0)\in T(X)$. By~Lemma~\ref{lemma:Cond.Rango}, we know that $I-h:T(X) \to Y$ satisfies the range condition~\eqref{eq:Cond.Rango}. Then, for each $n\in\N$ there exists $y_n=T(x_n)\in T(X)$ such that
$$y_0=\big(I+n(I-h)\big)(y_n),$$
that is,
\begin{equation}\label{eq:thm:FPP:1}
\frac{n+1}{n}(y_0-y_n)=y_0-h(y_n).
\end{equation}
We claim that  $\{y_n\}_{n\in\N}$ is bounded. Indeed, we shall prove that $\nor{y_n-y_0}\leq R$ for all $n\in\N$. Assume, for a contradiction, that $\nor{y_n-y_0}>R$ for some $n\in\N$, that is, $\nor{T(x_n)-T(x_0)}>R$. Bearing in mind~($iii$), we have that $S(x_n)-T(x_0)\neq\lambda(T(x_n)-T(x_0))$ for all $\lambda>1$. From the definition of $h$, we get that $h(y_n)-y_0\neq\lambda(y_n-y_0)$ for all $\lambda>1$ which contradicts ~\eqref{eq:thm:FPP:1}.
\par\medskip
Since $\{y_n\}_{n\in\N}$ is bounded, by~\eqref{eq:thm:FPP:1}, we have that
$$\lim_{n\to\infty}y_n-h(y_n)=\lim_{n\to\infty}\frac{1}{n}(y_0-y_n)=0.$$
Let $C:=\big\{y\in T(X):\limsup_n\nor{y_n-y}\leq\varrho\big\}$, with $\varrho:=\limsup_n\nor{y_n-y_0}$. Note that $C$ is nonempty because $y_0\in C$. It is easy to check that $C$ is closed and convex. Moreover, $h(C)\subseteq C$. Indeed, for any $y\in T(X)$, from the nonexpansiveness of $h$, we have that
$$\nor{y_n-h(y)}\leq\nor{y_n-h(y_n)}+\nor{h(y_n)-h(y)}\leq\nor{y_n-h(y_n)}+\nor{y_n-y}.$$
Taking upper limits as $n\to\infty$, we obtain that
$$\limsup_{n\to\infty}\nor{y_n-h(y)}\leq\limsup_{n\to\infty}\nor{y_n-y}\leq\varrho,$$
because $\{y_n\}_{n\in\N}$ is an a.f.p. sequence.
\par\medskip
Therefore, $h_{|_C}$ is a nonexpansive self-mapping. Since $Y$ has the FPP, there exists at least one $y^* \in C\subseteq T(X)$ such that $h(y^*)=y^*$.  Consider $x^*\in T^{-1}(y^*)$ then $S(x^*)=h(y^*)=y^*=T(x^*)$.
\end{proof}

In the case that $T$ becomes surjective, we can drop some assumptions of the previous theorem.

\begin{corollary}
Let $X$ be a nonempty set and let $(Y,\nor{\cdot})$ be a Banach space enjoying the FPP. If $T,S:X\to Y$ are two mappings satisfying:
\begin{enumerate}
\item[($i$)] $T$ is surjective  and $\nor{S(x)-S(y)}\leq\nor{T(x)-T(y)}$ for all $x,y\in X$,
\item[($ii$)] there exist $x_0\in X$ such that
$$\nor{T(x)-T(x_0)}\geq R\Rightarrow
S(x)-T(x_0)\neq\lambda(T(x)-T(x_0)) \quad\mbox{for all }\lambda>1,$$
\end{enumerate}
then there exists at least one $x$ in $X$ such that $Tx=Sx$.
\end{corollary}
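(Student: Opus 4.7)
The plan is to reduce this corollary directly to Theorem~\ref{thm:FPP} by observing that surjectivity of $T$ makes the geometric hypotheses on the range of $T$ trivial. Specifically, I would first note that the assumption that $T$ is surjective means $T(X) = Y$. Since $Y$ is a Banach space, it is automatically a closed convex subset of itself, so condition ($i$) of Theorem~\ref{thm:FPP} is satisfied without any further work.

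Next, I would check the remaining hypotheses of Theorem~\ref{thm:FPP}. For condition ($ii$), the inclusion $S(X) \subseteq T(X)$ holds trivially because $T(X) = Y$ contains $S(X) \subseteq Y$, and the nonexpansive-type inequality $\nor{S(x)-S(y)}\leq\nor{T(x)-T(y)}$ is precisely the second half of the corollary's ($i$). For condition ($iii$), the Leray--Schauder-type hypothesis is identical to the corollary's ($ii$), with the same point $x_0$ and constant $R$. Therefore, all the hypotheses of Theorem~\ref{thm:FPP} are fulfilled, and we conclude that there exists $x \in X$ with $T(x) = S(x)$.

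No step here is expected to be an obstacle: the corollary is a clean specialization, so the proof is essentially a single line invoking Theorem~\ref{thm:FPP}. If anything, the only nuance worth spelling out is why one does not need to separately assume that $T(X)$ is closed and convex (the point being that a whole Banach space trivially has these properties), which keeps the verification transparent for the reader.
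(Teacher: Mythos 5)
Your proposal is correct and is exactly the intended argument: the paper states this corollary without proof precisely because, when $T$ is surjective, $T(X)=Y$ is automatically closed and convex and contains $S(X)$, so Theorem~\ref{thm:FPP} applies verbatim. Nothing to add.
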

%
%
\subsection{ An existence principle to Problem \ref{p1}. }
\label{sec:Dif.Eq.G}
%
%

The main aim of this section is to establish an existence principle for the three-point boundary value problem. To be more precise, given $g:[0,1]\times \R^3\to \R$ a continuous function, $\delta\neq1$ and $\eta\in(0,1)$, we shall prove, under proper hypotheses, that the following problem
$$(P)
\left\{
\begin{array}{ll}
x''(t)-g\big(t,x(t),x'(t),x''(t)\big)=0 & \quad\text{for } \ {\rm a.e.} \ 0\leq t\leq 1, \\
& \\
x(0)=0,\qquad x'(1)=\delta x'(\eta), & \\
\end{array}
\right.
$$
has at least one solution in $W^{2,2}[0,1]$.
 \par\medskip
In order to do this, we shall need the following technical results, see~\cite[Lemma~2.2]{Pa.11} and~\cite[Theorem~2.3]{Gu-Tr.99} for their proofs, respectively.

\begin{lemma}\label{lemma:Wirtinger-general}
Let $h:[0,1]\to\R_+$ be Lebesgue integrable on each closed interval contained in $(0,1]$ satisfying the following condition:
\begin{equation}\label{cond:h}
\mbox{There exist $\ell\in\R_+$ such that }\;\int_t^1 h(s)\,ds\leq\frac{\ell}{t}\qquad\mbox{ for all }0<t<1.
\end{equation}
Then for each $x\in W^{1,2}[0,1]$, with $x(0)=0$, we have that
\begin{equation}\label{eq:Wirtinger-general}
	\int_0^1 h(t)\,x(t)^2dt\leq 4\ell\int_0^1x'(t)^2dt.
\end{equation}
\end{lemma}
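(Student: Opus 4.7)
The plan is to reduce to the classical one-dimensional Hardy inequality via integration by parts. Define $H(t):=\int_t^1 h(s)\,ds$ for $t\in(0,1]$. By the integrability hypothesis on $h$, the function $H$ is nonnegative and finite on $(0,1]$, absolutely continuous on every closed subinterval $[a,1]\subseteq(0,1]$, with $H'=-h$ a.e., $H(1)=0$, and $H(t)\leq \ell/t$ by assumption. Since $x\in W^{1,2}[0,1]\subseteq\mathcal{AC}[0,1]$, the function $x^2$ is absolutely continuous with derivative $2xx'$, so integration by parts on $[a,1]$, for any $0<a<1$, yields
$$\int_a^1 h(t)\,x(t)^2\,dt = H(a)\,x(a)^2 + 2\int_a^1 H(t)\,x(t)\,x'(t)\,dt.$$

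The next step is to let $a\to 0^+$. Since $x(0)=0$, Cauchy-Schwarz gives $x(a)^2=\bigl(\int_0^a x'(s)\,ds\bigr)^2\leq a\int_0^a x'(s)^2\,ds$, and therefore $H(a)\,x(a)^2\leq (\ell/a)\cdot a\int_0^a x'(s)^2\,ds=\ell\int_0^a x'(s)^2\,ds\to 0$. Monotone convergence handles the left-hand integral, while for the right-hand side I note that $|H(t)\,x(t)\,x'(t)|\leq (\ell/t)|x(t)||x'(t)|$ lies in $L^1(0,1)$ by Cauchy-Schwarz together with the classical Hardy inequality $\int_0^1 x(t)^2/t^2\,dt\leq 4\int_0^1 x'(t)^2\,dt$ (valid for every $x\in W^{1,2}[0,1]$ with $x(0)=0$). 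Dominated convergence then gives
$$\int_0^1 h(t)\,x(t)^2\,dt = 2\int_0^1 H(t)\,x(t)\,x'(t)\,dt.$$

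The final step is the main estimate. Combining the bound $H(t)\leq \ell/t$ with Cauchy-Schwarz and the Hardy inequality already recalled,
$$2\int_0^1 H\,x\,x'\,dt \leq 2\ell\int_0^1 \frac{|x(t)|}{t}|x'(t)|\,dt \leq 2\ell\left(\int_0^1 \frac{x(t)^2}{t^2}\,dt\right)^{1/2}\!\left(\int_0^1 x'(t)^2\,dt\right)^{1/2}\leq 4\ell\int_0^1 x'(t)^2\,dt,$$
which is the desired conclusion. The main obstacle I anticipate is the justification of the limit in the integration by parts: since $H$ may blow up at $0$, the manipulation cannot be performed directly on $[0,1]$, so one must truncate to $[a,1]$ and verify both that the boundary contribution $H(a)x(a)^2$ vanishes and that $Hxx'$ is integrable near the origin. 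Both facts rely on the decay $x(t)=O(\sqrt t)$ that is built into Hardy's inequality, which is then invoked once more to complete the final estimate.
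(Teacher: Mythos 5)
Your proof is correct. Note that the paper itself does not prove this lemma: it defers to \cite[Lemma~2.2]{Pa.11}, so there is no in-paper argument to compare against. Your route --- integrating by parts against $H(t)=\int_t^1 h(s)\,ds$, killing the boundary term via $x(a)^2\leq a\int_0^a x'(s)^2\,ds$, and then estimating $2\int_0^1 H\,x\,x'\,dt$ by Cauchy--Schwarz together with Hardy's inequality $\int_0^1 t^{-2}x(t)^2\,dt\leq 4\int_0^1 x'(t)^2\,dt$ --- is the natural one, and it correctly explains the constant $4\ell$ as the product of the bound $H(t)\leq \ell/t$ with Hardy's constant $4=(p/(p-1))^p$ at $p=2$. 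The two delicate points you flag (integrability of $Hxx'$ near $0$ and the vanishing of $H(a)x(a)^2$) are exactly the ones that need care, and you handle both; the identity $\int_0^1 h\,x^2\,dt=2\int_0^1 H\,x\,x'\,dt$ could equally be obtained by writing $x(t)^2=2\int_0^t x\,x'\,ds$ and applying Fubini, which avoids the truncation, but your limiting argument is complete as written.
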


For the sake of simplicity, for any $\ell$, we denote by $\mathcal{Z}(\ell)$ the set of non-negative functions $h:[0,1]\to\R_+$ that are Lebesgue integrable on each closed interval contained in $(0,1]$ and satisfy
$$\int_t^1 h(s)\,ds\leq\frac{\ell}{t}\qquad\mbox{ for all }0<t<1.$$

On one hand, notice that if $h\in L^2[0,1]$ then $h^2\in\mathcal{Z}(\ell)$ with $\ell\geq\nor{h}_2^2$. However, there exist functions $h:[0,1]\to\R$ with $h^2\in\mathcal{Z}(\ell)$, for some $\ell>0$, such that $h\not\in L^2[0,1]$. For instance, $h(t)=\frac{1}{t}\not\in L^2[0,1]$ but $h(t)^2=\frac{1}{t^2}\in\mathcal{Z}(1)$.
\par\medskip
By the other hand, if $h:[0,1]\to\R_+$ is a bounded measurable function with its boundedness constant $\kappa>0$, then $h\in\mathcal{Z}(\tfrac{\kappa}{4})$.
\begin{remark}
In the case that $h$ is a constant function, inequality~\eqref{eq:Wirtinger-general} is not sharp. Indeed, in this case, we have the well-known Wirtinger inequality~\cite[Theorem~256]{Ha-Li-Po.52}. Let $x\in W^{1,2}[0,1]$ be such that $x(0)=0$. Then
\begin{equation}\label{eq:Wirtinger}
\nor{x}_2\leq\frac{2}{\pi}\,\nor{x'}_2.
\end{equation}
\end{remark}

\begin{lemma}\label{lemma:Wirtinger-derivada}
Let $\delta\neq1$, and $\eta\in(0,1)$ be given. Let $x\in W^{2,2}[0,1]$ be such that $x'(1)=\delta x'(\eta)$. Then
$$\nor{x'}_2\leq C(\delta,\eta)\,\nor{x''}_2,$$
where
\begin{equation*}
\begin{split}
C(\delta,\eta)&=
\left\{
\begin{array}{ll}
\min\Big\{\sqrt{F(\delta,\eta)},\dfrac{2}{\pi}\Big\} & \mbox{ if }\delta\leq0, \\
& \\
\sqrt{F(\delta,\eta)} & \mbox{ if }\delta>0,
\end{array}\right.  \\[1.5em]
F(\delta,\eta)&=\frac{1}{2(\delta-1)^2}\left[\delta^2(1-\eta)^2+(\delta^2-2\delta)\eta^2+1\right].
\end{split}
\end{equation*}
\end{lemma}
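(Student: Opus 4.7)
The plan is to set $y := x' \in W^{1,2}[0,1]$, which reduces the claim to the inequality $\nor{y}_2 \leq C(\delta,\eta)\,\nor{y'}_2$ under the single boundary condition $y(1) = \delta\,y(\eta)$. Combining this condition with the identity $y(1)-y(\eta) = \int_\eta^1 y'(s)\,ds$ and using $\delta \neq 1$ yields the explicit value $y(\eta) = \frac{1}{\delta-1}\int_\eta^1 y'(s)\,ds$. Rewriting $y(t)$ by integrating from $\eta$ then produces, for $t\in[\eta,1]$,
$$y(t) = \frac{1}{\delta-1}\Big[\delta\int_\eta^t y'(s)\,ds + \int_t^1 y'(s)\,ds\Big],$$
and the symmetric formula $y(t) = \frac{1}{\delta-1}\int_\eta^1 y'(s)\,ds - \int_t^\eta y'(s)\,ds$ on $[0,\eta]$. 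These identities express $y(t)$ as a linear combination of two integrals of $y'$ over the disjoint sub-intervals produced by splitting at $t$, which is exactly the structure needed for a sharp weighted Cauchy--Schwarz.

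Next I apply the weighted inequality $(a_1b_1+a_2b_2)^2 \leq (a_1^2+a_2^2)(b_1^2+b_2^2)$ on $[\eta,1]$ with $a_1 = \delta\sqrt{t-\eta}$, $a_2 = \sqrt{1-t}$, and on $[0,\eta]$ with $a_1 = \sqrt{1-\eta}/(\delta-1)$, $a_2 = -\sqrt{\eta-t}$; the $b_i$ are chosen so that each $a_ib_i$ recovers the corresponding integral divided by $\sqrt{|I|}$. Combined with $(\int_I y')^2 \leq |I|\int_I (y')^2$ on each sub-interval, this gives the pointwise bounds
$$y(t)^2 \leq \frac{\delta^2(t-\eta)+(1-t)}{(\delta-1)^2}\,\nor{y'}_2^2 \quad (t\in[\eta,1]), \qquad y(t)^2 \leq \Big(\tfrac{1-\eta}{(\delta-1)^2}+(\eta-t)\Big)\nor{y'}_2^2 \quad (t\in[0,\eta]).$$
Integrating each estimate over its sub-interval, summing the two contributions, and expanding $(\delta-1)^2$ while collecting powers of $\delta$ collapses the total exactly to $F(\delta,\eta)\,\nor{y'}_2^2$, yielding $\nor{x'}_2 \leq \sqrt{F(\delta,\eta)}\,\nor{x''}_2$ for every $\delta\neq 1$.

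To produce the additional bound $\nor{x'}_2 \leq \frac{2}{\pi}\nor{x''}_2$ when $\delta\leq 0$, I would exploit the sign of the boundary condition: since $\delta\leq 0$ forces $y(\eta)$ and $y(1) = \delta\,y(\eta)$ to have opposite signs (or both to vanish), continuity of $y$ (which follows from $W^{1,2}[0,1]\subseteq \mathcal{C}[0,1]$) together with the intermediate value theorem supplies some $c\in[\eta,1]$ with $y(c)=0$. Applying the Wirtinger inequality~\eqref{eq:Wirtinger}, rescaled to the appropriate interval length, on each of $[0,c]$ and $[c,1]$ (with its zero sited at $c$) gives $\int_0^c y^2 \leq \frac{4c^2}{\pi^2}\int_0^c (y')^2$ and $\int_c^1 y^2 \leq \frac{4(1-c)^2}{\pi^2}\int_c^1 (y')^2$, and since $\max\{c,1-c\}\leq 1$ the sum delivers $\nor{y}_2^2 \leq \frac{4}{\pi^2}\nor{y'}_2^2$.

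The main obstacle I foresee is the sharp choice of weights in the Cauchy--Schwarz step: any coarser splitting (for instance $(a+b)^2\leq 2(a^2+b^2)$) would replace the combination $\delta^2(t-\eta)+(1-t)$ by something like $2(\delta^2(t-\eta) + (1-t))$ and destroy the exact constant $F(\delta,\eta)$. Once the correct weights are identified, everything else in the argument is routine integration and algebraic simplification.
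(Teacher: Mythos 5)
Your proof is correct: the reduction to $y=x'$, the representation of $y(t)$ via the two disjoint sub-integrals of $y'$, the exact Cauchy--Schwarz weighting whose integrated contributions sum precisely to $F(\delta,\eta)$ (I checked the algebra: the numerator collapses to $\delta^2(1-\eta)^2+(\delta^2-2\delta)\eta^2+1$), and the interior-zero-plus-rescaled-Wirtinger argument giving $\tfrac{2}{\pi}$ when $\delta\le 0$ all hold. The paper itself supplies no proof of this lemma --- it defers to \cite[Theorem~2.3]{Gu-Tr.99} --- and your argument is the natural self-contained derivation of exactly that constant, so there is nothing internal to the paper to compare against.
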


Consider the set $X:=\left\{u\in W^{2,2}[0,1]: u(0)=0,u'(1)=\delta u'(\eta)\right\}$ and the Hilbert space $Y=(L^2[0,1],\nor{\cdot}_2)$.
Let $T,S:X\to Y$ be defined by $T(u)(t)=u''(t)$ and $S(u)(t)=g\big(t,u(t),u'(t),u''(t)\big)$. It is easy to see that both mappings  $T$ and $S$ are well-defined. Moreover, the mapping $T:X\to Y$ is  surjective. Indeed, given a function $u\in Y$ if we consider the function
$$
v(t)=\int_0^t(t-s)u(s)\,ds
+\frac{t}{1-\delta}\left[\int_0^\eta\delta u(s)\,ds-\int_0^1u(s)\,ds\right],
$$
clearly,  $v\in X$ and $T(v)=u$, that is, $T:X\rightarrow Y$ is an onto mapping. This fact allows us to guarantee  that  $S(X)\subseteq T(X)$.

The following result will be essential in order to show the existence of solutions for the three-point boundary value problem~($P$).
\begin{proposition}\label{prop:desigualdades}
Let $p:[0,1]\to\R$ be such that $p^2\in\mathcal{Z}(\ell)$ for some $\ell\geq0$, and let $Q,R$ be  two constants. For each $x\in X$,
\begin{equation}\label{prop:eq:1}
\int_0^1p(t)\,\abs{x(t)}\,\abs{x'(t)}\,dt\leq 2\sqrt{\ell}\;C(\delta,\eta)^2\nor{x''}_2^2,
\end{equation}
\begin{equation}\label{prop:eq:2}
\int_0^1\big[p(t)\abs{x(t)}+Q\abs{x'(t)}\big]^2dt\leq\Big[2\sqrt{\ell}+Q\Big]^2\,C(\delta,\eta)^2\nor{x''}_2^2,
\end{equation}
and
\begin{equation}\label{prop:eq:3}
\int_0^1\big[p(t)\abs{x(t)}+Q\abs{x'(t)}+R\abs{x''(t)}\big]^2dt\leq\Lambda^2\nor{x''}_2^2,
\end{equation}
where
\begin{equation}\label{prop:constant:Lambda}
\Lambda:=\big(2\sqrt{\ell}+Q\big)\,C(\delta,\eta)+R
\end{equation}
\end{proposition}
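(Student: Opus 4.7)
The plan is to view all three inequalities as consequences of just two ingredients: Lemma~\ref{lemma:Wirtinger-general} (applied with the weight $h = p^2 \in \mathcal{Z}(\ell)$), and Lemma~\ref{lemma:Wirtinger-derivada} (to trade $\nor{x'}_2$ for $C(\delta,\eta)\nor{x''}_2$). The main tool for assembling them is the triangle inequality in $L^2[0,1]$ (Minkowski).

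First I would establish the \emph{key estimate}
$$\left(\int_0^1 p(t)^2\,x(t)^2\,dt\right)^{1/2} \leq 2\sqrt{\ell}\,\nor{x'}_2,$$
which is immediate from Lemma~\ref{lemma:Wirtinger-general} since $p^2\in\mathcal{Z}(\ell)$ and $x\in W^{1,2}[0,1]$ with $x(0)=0$. Combined with Lemma~\ref{lemma:Wirtinger-derivada}, this gives $\bigl(\int_0^1 p(t)^2 x(t)^2\,dt\bigr)^{1/2}\leq 2\sqrt{\ell}\,C(\delta,\eta)\nor{x''}_2$.

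For \eqref{prop:eq:1}, I would apply the Cauchy--Schwarz inequality to split $p|x|\cdot|x'|$:
$$\int_0^1 p(t)\,|x(t)|\,|x'(t)|\,dt \leq \left(\int_0^1 p(t)^2 x(t)^2\,dt\right)^{1/2}\nor{x'}_2 \leq 2\sqrt{\ell}\,\nor{x'}_2^2,$$
and then use Lemma~\ref{lemma:Wirtinger-derivada} once more to bound $\nor{x'}_2^2$ by $C(\delta,\eta)^2\nor{x''}_2^2$.

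For \eqref{prop:eq:2} and \eqref{prop:eq:3}, I would take $L^2$-norms and apply Minkowski's inequality to pull the sum outside. For \eqref{prop:eq:2},
$$\left(\int_0^1 \bigl[p|x|+Q|x'|\bigr]^2 dt\right)^{1/2} \leq \left(\int_0^1 p^2 x^2\,dt\right)^{1/2} + Q\nor{x'}_2 \leq \bigl(2\sqrt{\ell}+Q\bigr)\nor{x'}_2,$$
and then one more application of Lemma~\ref{lemma:Wirtinger-derivada} and squaring yields the claim. For \eqref{prop:eq:3}, I repeat the Minkowski step with the extra term $R|x''|$ whose $L^2$-norm is exactly $R\nor{x''}_2$, giving
$$\left(\int_0^1 \bigl[p|x|+Q|x'|+R|x''|\bigr]^2 dt\right)^{1/2} \leq \bigl(2\sqrt{\ell}+Q\bigr)C(\delta,\eta)\nor{x''}_2+R\nor{x''}_2 = \Lambda\nor{x''}_2,$$
and squaring finishes the argument. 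There is no real obstacle here; the only point requiring a little care is making sure Lemma~\ref{lemma:Wirtinger-general} applies, i.e.\ that $p^2$ indeed belongs to $\mathcal{Z}(\ell)$ (this is the hypothesis) and that $x\in W^{1,2}[0,1]$ with $x(0)=0$ (both follow from $x\in X\subseteq W^{2,2}[0,1]$).
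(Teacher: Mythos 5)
Your proof is correct and follows essentially the same route as the paper: both arguments rest on Lemma~\ref{lemma:Wirtinger-general} applied with $h=p^2\in\mathcal{Z}(\ell)$ together with Lemma~\ref{lemma:Wirtinger-derivada}, and the resulting constants are identical. The only difference is presentational: you assemble \eqref{prop:eq:2} and \eqref{prop:eq:3} with Minkowski's inequality, while the paper expands the squares and bounds the cross terms via Cauchy--Schwarz (using \eqref{prop:eq:1} for the cross term of \eqref{prop:eq:2}), which is just the unpacked form of the same estimate.
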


\begin{proof}
It is clear that~\eqref{prop:eq:1} is a consequence of H\"{o}lder's inequality, \eqref{eq:Wirtinger-general} and Lemma~\ref{lemma:Wirtinger-derivada}. Let us prove inequality~\eqref{prop:eq:2}. Using Lemma~\ref{lemma:Wirtinger-general}, inequality~\eqref{prop:eq:1} and Lemma~\ref{lemma:Wirtinger-derivada}, we obtain that
\begin{equation*}
\begin{split}
\int_0^1&\big[p(t)\abs{x(t)}+Q\abs{x'(t)}\big]^2dt\\
&=\int_0^1p(t)^2 x(t)^2dt+Q^2\int_0^1x'(t)^2dt+
2Q\int_0^1p(t) \abs{x(t)}\,\abs{x'(t)}\,dt	\\
&\leq4\ell\nor{x'}_2^2+Q^2\nor{x'}_2^2+4Q\sqrt{\ell}\;C(\delta,\eta)^2\nor{x''}_2^2	\\
&\leq4\ell\,C(\delta,\eta)^2\nor{x''}_2^2+Q^2\,C(\delta,\eta)^2\nor{x''}_2^2+
4Q\sqrt{\ell}\;C(\delta,\eta)^2\nor{x''}_2^2	\\
&=\Big[2\sqrt{\ell}+Q\Big]^2\,C(\delta,\eta)^2\nor{x''}_2^2.
\end{split}
\end{equation*}
In order to show~\eqref{prop:eq:3}, we use the previous inequality and H\"{o}lder's inequality
\begin{equation*}
\begin{split}
\int_0^1\big[p(t)\abs{x(t)}+&Q\abs{x'(t)}+R\abs{x''(t)}\big]^2dt \\
&=\int_0^1\big[p(t)\abs{x(t)}+Q\abs{x'(t)}\big]^2dt+R^2\int_0^1x''(t)^2dt \\
&\quad+2R\int_0^1\big[p(t)\abs{x(t)}+Q\abs{x'(t)}\big]\,\abs{x''(t)}dt	\\
&\leq\Big[2\sqrt{\ell}+Q\Big]^2\,C(\delta,\eta)^2\nor{x''}_2^2+R^2\nor{x''}^2 \\
&\quad+2R\nor{x''}_2\sqrt{\int_0^1\big[p(t)\abs{x(t)}+Q\abs{x'(t)}\big]^2dt}	\\
&\leq\Big[2\sqrt{\ell}+Q\Big]^2\,C(\delta,\eta)^2\nor{x''}_2^2+R^2\nor{x''}^2 \\
&\quad+2R\Big[2\sqrt{\ell}+Q\Big]\,C(\delta,\eta)\nor{x''}_2^2	\\
&=\Big[\big(2\sqrt{\ell}+Q\big)\,C(\delta,\eta)+R\Big]^2\nor{x''}_2^2.
\end{split}
\end{equation*}
\end{proof}

Now, we shall assume the following conditions on the function $g$
\begin{itemize}
\item[($H_1$)] There exist two constants $K_2,K_3\geq0$ and a function $k_1:[0,1]\to\R$, with $k_1^2\in\mathcal{Z}(\ell)$ for some $\ell\geq0$, such that
\begin{equation*}
\begin{split}
\abs{g(t,u_1,u_2,u_3)-g(t,v_1,v_2,v_3)}&\leq k_1(t)\,\abs{u_1-v_1} \\
&\qquad+K_2\,\abs{u_2-v_2}+K_3\,\abs{u_3-v_3},
\end{split}
\end{equation*}
for all $t\in[0,1]$ and $u_i,v_i\in\R$ with $i=1,2,3$. And the function $k_1$ and the constants $K_2$ and $K_3$ satisfy $\big(2\sqrt{\ell}+K_2\big)\,C(\delta,\eta)+K_3\leq1$.
\item[($H_2$)] There exist two functions $a_1,a_4:[0,1]\to\R$ with $a_1^2\in\mathcal{Z}(m)$ for some $m\geq0$ and $a_4\in L^2[0,1]$, and two constants $A_2,A_3\geq0$ such that $\big(2\sqrt{m}+A_2\big)\,C(\delta,\eta)+A_3<1$ and
$$\abs{g(t,u_1,u_2,u_3)}\leq a_1(t)\abs{u_1}+A_2\abs{u_2}+A_3\abs{u_3}+a_4(t)$$
for all $t\in[0,1]$ and $u_i\in\R$ with $i=1,2,3$.
\end{itemize}
\begin{theorem} Equation (P) has a solution in $W^{2,2}[0,1]\subseteq C^{1}[0,1]$ whenever assumptions $(H_1)$ and $(H_2)$ are satisfied.
\end{theorem}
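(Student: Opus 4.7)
\medskip

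The plan is to apply Theorem~\ref{thm:FPP} with the Hilbert space $Y=L^2[0,1]$ (which has the FPP since it is uniformly convex), with $X$, $T$ and $S$ as defined just above Proposition~\ref{prop:desigualdades}. Note that $X$ is a linear subspace of $W^{2,2}[0,1]$, since the boundary conditions $u(0)=0$ and $u'(1)=\delta u'(\eta)$ are linear; in particular $u-v\in X$ whenever $u,v\in X$. Since it was already observed that $T:X\to Y$ is surjective, hypothesis~$(i)$ of Theorem~\ref{thm:FPP} is automatic ($T(X)=L^2[0,1]$ is closed and convex), and the inclusion $S(X)\subseteq T(X)$ in~$(ii)$ also follows.

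The next step is to verify the nonexpansive bound in~$(ii)$. Given $u,v\in X$, hypothesis $(H_1)$ yields the pointwise estimate
\begin{equation*}
|S(u)(t)-S(v)(t)|\leq k_1(t)|u(t)-v(t)|+K_2|u'(t)-v'(t)|+K_3|u''(t)-v''(t)|.
\end{equation*}
Squaring and integrating, and then applying inequality~\eqref{prop:eq:3} of Proposition~\ref{prop:desigualdades} to the function $u-v\in X$ with $p=k_1$, $Q=K_2$, $R=K_3$, I obtain
\begin{equation*}
\nor{S(u)-S(v)}_2\leq\Lambda\,\nor{(u-v)''}_2=\Lambda\,\nor{T(u)-T(v)}_2,
\end{equation*}
where $\Lambda=(2\sqrt{\ell}+K_2)\,C(\delta,\eta)+K_3\leq 1$ by $(H_1)$. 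This gives the nonexpansiveness required in~$(ii)$.

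For the Leray--Schauder condition~$(iii)$, I would choose $x_0=0\in X$, so $T(x_0)=0$, and argue by contradiction: if $S(x)=\lambda T(x)$ for some $\lambda>1$, then $\nor{T(x)}_2\leq\nor{S(x)}_2$. Applying the $L^2$-triangle inequality to the bound in $(H_2)$ and then inequality~\eqref{prop:eq:3} (this time with $p=a_1$, $Q=A_2$, $R=A_3$, and constant $m$), I get
\begin{equation*}
\nor{S(x)}_2\leq\Lambda'\,\nor{x''}_2+\nor{a_4}_2=\Lambda'\,\nor{T(x)}_2+\nor{a_4}_2,
\end{equation*}
with $\Lambda':=(2\sqrt{m}+A_2)\,C(\delta,\eta)+A_3<1$. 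Combining these two inequalities yields
\begin{equation*}
\nor{T(x)}_2\leq\frac{\nor{a_4}_2}{1-\Lambda'}=:R,
\end{equation*}
so choosing any radius strictly larger than this $R$ forces the implication in~$(iii)$ to hold vacuously.

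All hypotheses of Theorem~\ref{thm:FPP} being met, there exists $u\in X$ with $T(u)=S(u)$, i.e. $u''(t)=g(t,u(t),u'(t),u''(t))$ a.e.\ on $[0,1]$, with $u(0)=0$ and $u'(1)=\delta u'(\eta)$, and $u\in W^{2,2}[0,1]\subseteq\mathcal{C}^1[0,1]$ as required. The main point that has to be handled carefully is the one in~$(iii)$: the presence of the extra term $a_4(t)$ in $(H_2)$ forces the use of the strict inequality $\Lambda'<1$ (in contrast to the $\Lambda\leq 1$ in $(H_1)$) in order to absorb $\nor{a_4}_2$ into a finite bound $R$; the remaining verifications are routine applications of Proposition~\ref{prop:desigualdades}.
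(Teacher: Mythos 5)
Your proposal is correct and follows essentially the same route as the paper: apply Theorem~\ref{thm:FPP} to the same $X$, $Y=L^2[0,1]$, $T$, $S$, using $(H_1)$ together with inequality~\eqref{prop:eq:3} of Proposition~\ref{prop:desigualdades} for the nonexpansive estimate and $(H_2)$ with the same inequality for the a priori bound giving the Leray--Schauder condition. The only (cosmetic) difference is that you absorb $\Lambda'\nor{T(x)}_2$ directly to get the explicit radius $\nor{a_4}_2/(1-\Lambda')$, whereas the paper reaches the same conclusion by a contradiction argument letting $\nor{u''}_2\to\infty$.
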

\begin{proof}
Using~($H_1$) and inequality~\eqref{prop:eq:3} in Proposition~\ref{prop:desigualdades}, for any $u,v\in X$ we have that
\begin{equation*}
\begin{split}
&\hspace{
-2em}\nor{S(u)-S(v)}^2_2\\
&=\int_0^1\abs{g(t,u(t),u'(t),u''(t))-g(t,v(t),v'(t),v''(t))}^2\,dt	\\
&\leq \int_0^1 \left[k_1(t)\abs{u(t)-v(t)}+K_2\abs{u'(t)-v'(t)}+K_3\abs{u''(t)-v''(t)}\right]^2dt	\\
&\leq \Big[\big(2\sqrt{\ell}+K_2\big)\,C(\delta,\eta)+K_3\Big]^2\nor{u''-v''}_2^2	\\
&\leq \nor{T(u)-T(v)}_2^2,
\end{split}
\end{equation*}
because $\big(2\sqrt{\ell}+K_2\big)\,C(\delta,\eta)+K_3\leq1$.
\par\medskip
Finally, we have to check that $T$ and $S$ fulfill Leray-Schauder condition. In order to see this, since $T(0)=0$, let us assume that $u$ is a solution of the equation:
\begin{equation}\label{00}
T(u)=\mu S(u),\quad\mbox{for some }\mu\in(0,1)
\end{equation}
The above equality implies that
$$u''(t)=\mu g\big(t,u(t),u'(t),u''(t)\big)\quad\mbox{a.e. in }[0,1].$$
Assumption~$(H_2)$ yields
$$|u''(t)|\leq \mu \big( a_1(t)\abs{u(t)}+A_2\abs{u'(t)}+A_3\abs{u''(t)}+a_4(t)\big)
\quad\mbox{ a.e. in }[0,1],$$
hence,
\begin{equation*}
\begin{split}
\nor{u''}_2^2&\leq \mu^2\int_0^1\Big[a_1(t)\abs{u(t)}+A_2\abs{u'(t)}+A_3\abs{u''(t)}\Big]^2dt+\mu^2\nor{a_4}_2^2 \\
&\quad+2\mu^2\int_0^1\Big[a_1(t)\abs{u(t)}+A_2\abs{u'(t)}+A_3\abs{u''(t)}\Big]\,a_4(t)dt.
\end{split}
\end{equation*}
By~\eqref{prop:eq:3} and H\"{o}lder's inequality, we deduce that
\begin{equation*}
\begin{split}
\nor{u''}_2^2&\leq \mu^2\Big[\big(2\sqrt{m}+A_2\big)\,C(\delta,\eta)+A_3\Big]^2\nor{u''}_2^2+\mu^2\nor{a_4}_2^2 \\
&\quad+2\mu^2\nor{a_4}_2\sqrt{\int_0^1\Big[a_1(t)\abs{u(t)}+A_2\abs{u'(t)}+A_3\abs{u''(t)}\Big]^2dt}	\\
&\leq \mu^2\Big[\big(2\sqrt{m}+A_2\big)\,C(\delta,\eta)+A_3\Big]^2\nor{u''}_2^2+\mu^2\nor{a_4}_2^2 \\
&\quad+2\mu^2\nor{a_4}_2\Big[\big(2\sqrt{m}+A_2\big)\,C(\delta,\eta)+A_3\Big]\nor{u''}_2	\\
&=\mu^2\Big[\big(\big(2\sqrt{m}+A_2\big)\,C(\delta,\eta)+A_3\big)\nor{u''}_2+\nor{a_4}_2\Big]^2.
\end{split}
\end{equation*}
Consequently
$$1\leq\mu^2\left[\big(2\sqrt{m}+A_2\big)\,C(\delta,\eta)+A_3+\frac{\nor{a_4}_2}{\nor{u''}_2}\right]^2.$$
Taking limits as $\nor{u''}_2$ goes to infinity we achieve a contradiction, because $\big(2\sqrt{m}+A_2\big)\,C(\delta,\eta)+A_3<1$. Therefore there exists $R>0$ such that if $u$ is a solution of Eq.(\ref{00}), then $\nor{T(u)}\leq R.$
\par\medskip
Bearing in mind that $Y$ is a Hilbert space and therefore it has the FPP, by Theorem~\ref{thm:FPP} we conclude that problem~($P$) has at least one solution in $W^{2,2}[0,1]$.
\end{proof}

Now we shall give an example of a family of functions $g:[0,1]\times\R^3\to\R$ which satisfy hypotheses~($H_1$) and ($H_2$).
\par\medskip
Let $\alpha:[0,1]\to\R$ be such that $\alpha^2\in\mathcal{Z}(m)$ for some $m\geq0$. Let $f_2,f_3:\R\to\R$ be two lipschitzian functions with Lipschitz constants $L_2$ and $L_3$, respectively. Let $\beta:[0,1]\to\R$ be a function in $L^2[0,1]$. Consider $g:[0,1]\times\R^3\to\R$ defined by
$$g(t,u_1,u_2,u_3)=\alpha(t)\frac{2u_1^2}{1+u_1^2}+f_2(u_2)+f_3(u_3)+\beta(t).$$
If $(\frac{3}{2}\sqrt{3m}+L_2)C(\delta,\eta)+L_3\leq1$ then $g$ satisfies~($H_1$) and~($H_2$). Indeed, for any $t\in[0,1]$ and $u_i,v_i\in\R$, with $i=1,2,3$, we have that
\begin{equation*}
\begin{split}
\abs{g(t,u_1,u_2,u_3)-g(t,v_1,v_2,v_3)}&
\leq\abs{\alpha(t)}\,\abs{\frac{2u_1^2}{1+u_1^2}-\frac{2v_1^2}{1+v_1^2}}\\[.5em]
&\quad+\abs{f_2(u_2)-f_2(v_2)}+\abs{f_3(u_3)-f_3(v_3)}	\\[.5em]
&\leq M\abs{\alpha(t)}\,\abs{u_1-u_2} \\[.5em]
&\quad+L_2\,\abs{u_2-v_2}+L_3\,\abs{u_3-v_3},
\end{split}
\end{equation*}
where $M:=\max_{x\in\R}\abs{f_1'(x)}=\frac{3\sqrt{3}}{4}$ with $f_1(x)=\frac{2x^2}{1+x^2}$. Considering $k_1(t)=M\abs{\alpha(t)}$ and  $K_i=L_i$ for $i=2,3$, we get to~($H_1$) because
$$\int_t^1 k_1(s)^2ds=M^2\int_t^1\alpha(s)^2ds\leq \frac{M^2m}{t}\qquad\mbox{ for all }0<t<1,$$
that is, $k_1^2\in\mathcal{Z}(M^2m)$. Since $\abs{f_1(x)}\leq\abs{x}$ for all $x\in\R$ and $f_2,f_3$ are Lipschitz, we infer that
$$
\abs{g(t,u_1,u_2,u_3)}\leq\abs{\alpha(t)}\,\abs{u_1}+L_2\,\abs{u_2}+L_3\,\abs{u_3}+\abs{\beta(t)+f_2(0)+f_3(0)}.
$$
Taking $a_1(t)=\abs{\alpha(t)}$, $A_i=L_i$ for $i=,2,3$, and $a_4(t)=\abs{\beta(t)+f_2(0)+f_3(0)}$, we have that $g$ satisfies~($H_2$) because by hypothesis $a_1^2=\alpha^2\in\mathcal{Z}(m)$ and 
\begin{equation*}
\begin{split}
1&\geq\left(\frac{3}{2}\sqrt{3m}+L_2\right)\,C(\delta,\eta)+L_3\\
&>\big(2\sqrt{m}+L_2\big)\,C(\delta,\eta)+L_3
 =\big(2\sqrt{m}+A_2\big)\,C(\delta,\eta)+A_3.
\end{split}
\end{equation*}
\begin{example}
The problem
$$
\left\{
\begin{array}{l}
\dfrac{x''(t)^3+2x''(t)}{x''(t)^2+3}=\dfrac{\kappa\,x(t)^2}{t+t\,x(t)^2}
+\log\big(t\sqrt{1+2e^{x'(t)}}\,\big) \quad\text{for a.e. }0\leq t\leq1, \\
 \\
x(0)=0,\qquad 10x'(1)+x'(\tfrac{1}{2})=0, \\
\end{array}
\right.
$$
has at least one solution in $W^{2,2}[0,1]$ whenever $\abs{\kappa}\leq\frac{4\pi-6}{9\sqrt{3}}$. Indeed, in this case $C(\delta,\eta)=C(\tfrac{-1}{10},\tfrac{1}{2})=\frac{2}{\pi}$, $\alpha(t)=\frac{\kappa}{2t}\in\mathcal{Z}(\frac{\kappa^2}{4})$, $\beta(t)=\log(t)\in L^2[0,1]$, $f_2(z)=\log\big(\sqrt{1+2e^z}\,\big)$ and
$$f_3(z)=\frac{z}{z^2+3}$$
are two lipschitzian functions with Lipschitz constants $L_2=\frac{1}{2}$ and $L_3=\frac{1}{3}$, respectively. And, moreover,
\begin{equation*}
\begin{split}
\left(\frac{3}{2}\sqrt{3m}+L_2\right)C(\delta,\eta)+L_3\leq1
&\Longleftrightarrow\left(\frac{3\sqrt{3}}{2}\abs{\kappa}+1\right)\frac{1}{\pi}+\frac{1}{3}\leq1\\
&\Longleftrightarrow\abs{\kappa}\leq\dfrac{4\pi-6}{9\sqrt{3}}.
\end{split}
\end{equation*}
\end{example}
%
%
\section{Coincidence problem without assuming the FPP}
%
%
In this section we establish  several  positive result to the Coincidence Problem~\eqref{e11} when $Y$ does not enjoy, necessarily, the FPP. Later on, we will apply such results to discuss the existence and uniqueness of a solution to Problems \ref{p2} and \ref{p3}.
\begin{theorem}\label{thm:Gen.Theorem}
Let $(X,d)$ be a semi-metric space and let $(Y,\nor{\cdot})$ be a Banach space. Assume that $T,S:X\to Y$ are two mappings satisfying:
\begin{enumerate}
\item[($C_1$)] $T(X)$ is a closed convex subset of $Y$,
\item[($C_2$)] $S(X) \subseteq T(X)$ and $\nor{S(x)-S(y)}\leq\nor{T(x)-T(y)}$ for all $x,y\in X$,
\item[($C_3$)] There exists $f\in\mathcal{F}$ such that $f(\nor{T(x)-T(y)})\leq d(x,y)$ for all $x,y\in X$,
\item[($C_4$)] $T-S$ is $\varphi$-expansive,
\item[($C_5$)] there exist $x_0\in X$ and $R>0$ such that
$$\nor{T(x)-T(x_0)}\geq R\Rightarrow
S(x)-T(x_0)\neq\lambda(T(x)-T(x_0)) \quad\mbox{for all }\lambda>1.$$
\end{enumerate}
Then there exists a unique $x$ in $X$ such that $Tx=Sx$.
\end{theorem}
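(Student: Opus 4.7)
The proof would closely parallel the structure of Theorem 3.1, but replaces the appeal to the FPP with a Cauchy/convergence argument driven by conditions (C3) and (C4).

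\textbf{Setup.} As in Theorem 3.1, I define $h:T(X)\to T(X)$ by $h(T(x))=S(x)$; condition (C2) makes $h$ well defined (single valued) and nonexpansive. Since (C1) gives that $T(X)$ is a closed convex subset of the Banach space $Y$ and $h$ is a nonexpansive self-map, Lemma~\ref{lemma:Cond.Rango} yields the range condition for $I-h$. Starting from $y_0:=T(x_0)$, for each $n\in\N$ I solve $y_0=(I+n(I-h))(y_n)$ to get $y_n=T(x_n)\in T(X)$ with
\[
\frac{n+1}{n}(y_0-y_n)=y_0-h(y_n).
\]

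\textbf{Boundedness and almost fixed point sequence.} Exactly as in Theorem 3.1, (C5) forces $\|y_n-y_0\|\leq R$ for all $n$: otherwise the displayed identity would give $h(y_n)-y_0=\lambda(y_n-y_0)$ for $\lambda=(n+1)/n>1$, contradicting (C5). With $\{y_n\}$ bounded, dividing the identity by $n$ gives $y_n-h(y_n)\to 0$, i.e.\ $(T-S)(x_n)=T(x_n)-S(x_n)=y_n-h(y_n)\to 0$ in $Y$.

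\textbf{From almost fixed point to Cauchy sequence.} This is the step where (C4) and (C3) replace the FPP. Since
\[
\|(T-S)(x_n)-(T-S)(x_m)\|\leq \|(T-S)(x_n)\|+\|(T-S)(x_m)\|\xrightarrow[n,m\to\infty]{} 0,
\]
the $\varphi$-expansiveness in (C4) gives $\varphi(d(x_n,x_m))\to 0$. Because $\varphi\in\mathcal{F}$ is nondecreasing with $\varphi(r)=0\Leftrightarrow r=0$, a standard contradiction argument (any non-null subsequence $d(x_{n_k},x_{m_k})\geq\eps$ would give $\varphi(d(x_{n_k},x_{m_k}))\geq\varphi(\eps)>0$) forces $d(x_n,x_m)\to 0$. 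Now (C3) gives $f(\|y_n-y_m\|)=f(\|T(x_n)-T(x_m)\|)\leq d(x_n,x_m)\to 0$, and the same argument applied to $f\in\mathcal{F}$ yields $\|y_n-y_m\|\to 0$. Thus $\{y_n\}$ is Cauchy in the Banach space $Y$, so it converges to some $y^\ast\in T(X)$ by (C1).

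\textbf{Coincidence and uniqueness.} Continuity of $h$ together with $y_n-h(y_n)\to 0$ gives $h(y^\ast)=y^\ast$. Pick any $x^\ast\in T^{-1}(y^\ast)$; by definition of $h$, $S(x^\ast)=h(T(x^\ast))=h(y^\ast)=y^\ast=T(x^\ast)$. For uniqueness, if $T(x^\ast)=S(x^\ast)$ and $T(z^\ast)=S(z^\ast)$, then $(T-S)(x^\ast)=(T-S)(z^\ast)=0$, so (C4) gives $\varphi(d(x^\ast,z^\ast))\leq 0$, hence $d(x^\ast,z^\ast)=0$, and since $d$ is a semi-metric this means $x^\ast=z^\ast$.

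The only nonroutine step is item three, where one must carefully chain (C4) and (C3) together: the first transports the $Y$-smallness of $(T-S)(x_n)$ to $d$-smallness of $(x_n)$ in the semi-metric, and the second transports that back to norm-smallness of $(T(x_n))$ in $Y$. Everything else mirrors the FPP proof with the final FPP invocation replaced by completeness of $Y$ plus closedness of $T(X)$.
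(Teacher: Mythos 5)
Your proposal is correct and follows essentially the same route as the paper: the same construction of the nonexpansive single-valued map $h$ on $T(X)$, the same use of the range condition and ($C_5$) to produce a bounded almost fixed point sequence, and the same chaining of ($C_4$) and ($C_3$) to show that sequence is Cauchy (the paper composes the two steps into a single function $\psi=\varphi\circ f\in\mathcal{F}$, whereas you apply the standard $\mathcal{F}$-argument twice, which is an immaterial difference). The coincidence and uniqueness steps also match the paper's.
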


\begin{proof}
Consider $h:T(X)\to2^{T(X)}$ given by $h(x)=S(T^{-1}(x))$, where $T^{-1}(x):=\big\{y\in X:T(y)=x\big\}$. Notice that, the argument developed in the proof of Theorem~\ref{thm:FPP} allows us to infer that $h$ is  a nonexpansive,  single valued mapping and moreover that, there exists a  bounded a.f.p. sequence $(x_n)$ in $T(X).$

Let us see that $(x_n)$ is a Cauchy sequence. Using~($C_2$) and~($C_3$),
\begin{align*}
	\nor{(I-h)(x_n)-(I-h)(x_m)} & =\nor{(T-S)(u_n)-(T-S)(u_m)}	\\
		 & \geq\varphi(d(u_n,u_m))\geq\varphi\big(f(\nor{x_n-x_m})\big),
\end{align*}
where $u_n\in T^{-1}(x_n)$ and $u_m\in T^{-1}(x_m)$. Then, since $(x_n)$ is an a.f.p. sequence it is clear that given $\varepsilon >0$ there exists $n_0\in\mathbb{N}$ such that if $n, m\geq n_0,$ then $\psi(\nor{x_n-x_m}):=\varphi\circ f(\nor{x_n-x_m})<\varepsilon.$ This means that $(x_n)$ is a Cauchy sequence since $\psi\in \mathcal{F}$.

Since $T(X)$ is a closed subset of the Banach space $Y$ there exists $y^{*}\in T(X)$ such that $x_n\rightarrow y^{*}.$ That is, $h(y^*)=y^*$.  Consider $x^*\in T^{-1}(y^*)$ then
$$S(x^*)=h(y^*)=y^*=T(x^*).$$
Moreover, $T$ and $S$ have a unique coincidence point. Indeed, if there exists $x'$ in $X$ such that $S(x')=T(x')$, then from~($C_4$) we have that
$$\varphi(d(x^*,x'))\leq\nor{(T-S)(x^*)-(T-S)(x')}=0.$$
Since $\varphi\in\mathcal{F}$, we obtain that $x^*=x'$.
\end{proof}

\begin{corollary}
Let $(X,d)$ be a semi-metric space and let $(Y,\nor{\cdot})$ be a Banach space. Assume that $T,S:X\to Y$ satisfy the following properties:
\begin{enumerate}
\item[($C_1$)] $T(X)$ is a closed convex subset of $Y$,
\item[($C_2$)] $S(X) \subset T(X)$ and $\nor{S(x)-S(y)}\leq\nor{T(x)-T(y)}$ for all $x,y\in X$,
\item[($C_3^*$)] $T$ is uniformly continuous,
\item[($C_4$)] $T-S$ is $\varphi$-expansive,
\item[($C_5$)] there exist $x_0\in X$ and $R>0$ such that
$$\nor{T(x)-T(x_0)}\geq R\Rightarrow
S(x)-T(x_0)\neq\lambda(T(x)-T(x_0)) \quad\mbox{for all }\lambda>1.$$
\end{enumerate}
Then there exists a unique $x$ in $X$ such that $T(x)=S(x)$.
\end{corollary}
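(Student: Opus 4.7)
The plan is to reimitate the proof of Theorem~\ref{thm:Gen.Theorem} almost verbatim, substituting the uniform continuity hypothesis $(C_3^*)$ for $(C_3)$ only at the single step where it is needed, namely the passage from $d$-convergence in $X$ to norm-convergence of the a.f.p.~sequence in $Y$.

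First, exactly as in the proofs of Theorem~\ref{thm:FPP} and Theorem~\ref{thm:Gen.Theorem}, define $h:T(X)\to T(X)$ by $h(x)=S(T^{-1}(x))$. Condition~($C_2$) makes $h$ well defined (single-valued) and nonexpansive. Since $T(X)$ is closed and convex by~($C_1$), Lemma~\ref{lemma:Cond.Rango} guarantees that $I-h$ satisfies the range condition, and the Leray--Schauder-type assumption~($C_5$) allows one to repeat the construction that produces a bounded sequence $(x_n)\subset T(X)$ such that $x_n - h(x_n)\to 0$, via the resolvents $y_0=(I+n(I-h))(x_n)$ and the boundedness argument of Theorem~\ref{thm:FPP}.

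The only step that genuinely requires a new argument is showing that $(x_n)$ is Cauchy. Choose $u_n\in T^{-1}(x_n)$, so that $(I-h)(x_n)=(T-S)(u_n)$. By the $\varphi$-expansiveness of $T-S$ in~($C_4$),
$$\varphi\bigl(d(u_n,u_m)\bigr)\leq\nor{(T-S)(u_n)-(T-S)(u_m)}=\nor{(I-h)(x_n)-(I-h)(x_m)}\longrightarrow 0.$$
Because $\varphi\in\mathcal{F}$ is nondecreasing and vanishes only at $0$, a standard subsequence argument forces $d(u_n,u_m)\to 0$. At this point the old proof invoked~($C_3$) to transfer this to $\nor{T(u_n)-T(u_m)}=\nor{x_n-x_m}$; here we simply invoke the uniform continuity of $T$ from~($C_3^*$), which yields the same conclusion directly. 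Hence $(x_n)$ is Cauchy in $Y$.

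Closedness of $T(X)$ gives a limit $y^*\in T(X)$; continuity of $h$ together with $x_n-h(x_n)\to 0$ yields $h(y^*)=y^*$, and any $x^*\in T^{-1}(y^*)$ satisfies $S(x^*)=h(y^*)=y^*=T(x^*)$. Uniqueness follows from~($C_4$): if $T(x')=S(x')$, then $\varphi(d(x^*,x'))\leq\nor{(T-S)(x^*)-(T-S)(x')}=0$, so $d(x^*,x')=0$, i.e.\ $x^*=x'$. The only real subtlety, and the main obstacle if one is being careful, is verifying that a nondecreasing $\varphi$ with $\varphi(r)=0\iff r=0$ suffices to deduce $d(u_n,u_m)\to 0$ from $\varphi(d(u_n,u_m))\to 0$; otherwise the argument is a clean substitution of $(C_3^*)$ for $(C_3)$ in the Cauchy step of Theorem~\ref{thm:Gen.Theorem}.
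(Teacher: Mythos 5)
Your proof is correct, but it is organized differently from the paper's. The paper proves the corollary as a genuine reduction to Theorem~\ref{thm:Gen.Theorem}: it manufactures the function required by $(C_3)$ explicitly, setting
$$f(r):=\inf\Big\{\,d(u,v):u\in T^{-1}(x),\,v\in T^{-1}(y),\ x,y\in T(X),\ \nor{x-y}\geq r\Big\},$$
checks that $f$ is nondecreasing with $f(0)=0$, and uses the uniform continuity of $T$ in a sequential contradiction argument to show $f(r)>0$ for $r>0$; then $(C_3)$ holds by construction and the theorem applies as a black box. You instead re-run the proof of the theorem and splice uniform continuity in at the single point where $(C_3)$ was used, namely to pass from $d(u_n,u_m)\to 0$ (obtained from the $\varphi$-expansiveness of $T-S$ and the a.f.p.\ property) to $\nor{x_n-x_m}=\nor{T(u_n)-T(u_m)}\to 0$. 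Both arguments rest on exactly the same use of uniform continuity; yours is more direct and sidesteps the mild bookkeeping in the paper's construction (well-definedness and monotonicity of the infimum, e.g.\ when the defining set is empty for large $r$), at the cost of repeating the body of the theorem's proof rather than quoting it. The ``subtlety'' you flag --- deducing $d(u_n,u_m)\to 0$ from $\varphi(d(u_n,u_m))\to 0$ for a nondecreasing $\varphi$ vanishing only at $0$ --- is handled correctly by your subsequence argument and is in any case already implicit in the paper's own proof of Theorem~\ref{thm:Gen.Theorem}, where the analogous step is asserted for $\psi=\varphi\circ f$.
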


\begin{proof}
It is enough to see that condition ($C_3$) in Theorem \ref{thm:Gen.Theorem} holds.  To this end, we define the function $f:\R_+\to\R_+$ by
$$f(r):=\!\inf_{\substack{
   x,y\in T(X) \\
   \|x-y\|\geq r
  }}
\hspace{-6pt}\Big\{\ d(u,v):u\in T^{-1}(x),v\in T^{-1}(y)\Big\}.$$
It is easy to check that $f$ is well-defined and $f\in\mathcal{F}$. Let us prove that if $r>0$ then $f(r)>0$. Arguing by contradiction, assume that there exist two sequences $\{x_n\}$ and $\{y_n\}$ such that $\nor{x_n-y_n}\geq r$ and there exists $\{u_n\}$ and $\{v_n\}$ with $u_n\in T^{-1}(x_n)$ and $v_n\in T^{-1}(y_n)$ for each $n\in\N$, such that $d(u_n,v_n)\to 0$ as $n\to+\infty$. Since $T$ is uniformly continuous, we get that
$$r\leq\limsup_{n\to\infty}\nor{x_n-y_n}=\limsup_{n\to\infty}\nor{T(u_n)-T(v_n)}=0,$$
which is a contradiction.
\end{proof}
%
%
\subsection{A generalization of Goebel's Theorem}\label{sec:Goebel_Thm}
%
%
In 1973, Geraghty~\cite{Ge.73} gave an interesting generalization of the contraction principle using the class $\boldsymbol{\mathcal{S}}$ of the functions $\alpha:\R_+\to[0,1)$ satisfying the following condition:
\begin{equation}\label{eq:cond.Geraghty}
\lim_{n\to\infty}\alpha(t_n)=1\quad\text{implies}\quad\lim_{n\to\infty}t_n=0.
\end{equation}

In this section we shall give a generalization of Goebel's Theorem in the setting of Banach spaces.

\begin{theorem}\label{thm:Coincidence.Contraction}
Let $X$ be a nonempty set and let $(Y,\nor{\cdot})$ be a Banach space. If   $T,S:X\to Y$ are two mappings such that  $T$ is onto and there exists a decreasing function $\alpha\in\boldsymbol{\mathcal{S}}$ satisfying
\begin{equation}\label{eq:Contractive.Version}
\nor{Sx-Sy}\leq\alpha\big(\nor{Tx-Ty}\big)\,\nor{Tx-Ty}
\quad\mbox{for all }x,y\in X.
\end{equation}
Then, there exists at least one $x^*\in X$ such that $Tx^*=Sx^*$. If, in addition, $T$ is injective, then the coincidence point $x^*$ is unique.
\end{theorem}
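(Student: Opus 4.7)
The plan is to reduce the coincidence problem to a fixed point problem for a single-valued self-map of $Y$, paralleling the constructions used in Theorem~\ref{thm:FPP} and Theorem~\ref{thm:Gen.Theorem}, and then invoke Geraghty's fixed point theorem~\cite{Ge.73}. Since $T$ is onto, the preimage $T^{-1}(y)=\{x\in X:T(x)=y\}$ is nonempty for every $y\in Y$, so we may define $h:Y\to Y$ by $h(y):=S(x)$ for any choice of $x\in T^{-1}(y)$. The first step is to verify that $h$ is well-defined: if $T(x_1)=T(x_2)=y$, then applying~\eqref{eq:Contractive.Version} with $x=x_1$ and $y=x_2$ yields $\nor{Sx_1-Sx_2}\leq\alpha(0)\cdot0=0$, so $Sx_1=Sx_2$, and the value $h(y)$ does not depend on the representative.

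Next I would show that $h$ inherits the Geraghty-type contraction from the hypothesis on $T$ and $S$. Given $y_1,y_2\in Y$, pick $x_i\in T^{-1}(y_i)$; then by~\eqref{eq:Contractive.Version},
$$\nor{h(y_1)-h(y_2)}=\nor{Sx_1-Sx_2}\leq\alpha\big(\nor{Tx_1-Tx_2}\big)\,\nor{Tx_1-Tx_2}=\alpha\big(\nor{y_1-y_2}\big)\,\nor{y_1-y_2}.$$
Since $Y$ is a Banach space, it is a complete metric space, and $h:Y\to Y$ satisfies the Geraghty contractive condition with $\alpha\in\boldsymbol{\mathcal{S}}$. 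By Geraghty's theorem, there exists a unique fixed point $y^*\in Y$ of $h$. Choosing any $x^*\in T^{-1}(y^*)$, we obtain $T(x^*)=y^*=h(y^*)=S(x^*)$, proving existence of a coincidence point.

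For the uniqueness statement, suppose $T$ is injective and that $x_1^*,x_2^*\in X$ are both coincidence points. Then $T(x_i^*)=S(x_i^*)=h(T(x_i^*))$, so $T(x_1^*)$ and $T(x_2^*)$ are both fixed points of $h$. By the uniqueness clause of Geraghty's theorem, $T(x_1^*)=T(x_2^*)$, and injectivity of $T$ then forces $x_1^*=x_2^*$.

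The routine obstacles are the two preliminary verifications (well-definedness of $h$ and transfer of the contractive estimate from $(T,S)$ to $h$); the only substantive step is the appeal to Geraghty's fixed point theorem, which applies directly because $Y$ is complete and the Geraghty inequality holds on the whole of $Y$ with no domain restriction. In particular, unlike the earlier results in this section, no Leray--Schauder boundary condition is needed here, since the contractive estimate itself yields boundedness of the Picard iterates of $h$.
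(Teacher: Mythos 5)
Your argument is correct, but it takes a genuinely different route from the paper's. You collapse the problem directly onto $Y$ (legitimate since $T$ is onto, so $T(X)=Y$), check that the induced single-valued map $h(y)=S(T^{-1}(y))$ is a Geraghty contraction on the complete space $Y$, and quote Geraghty's fixed point theorem~\cite{Ge.73} wholesale; both the well-definedness check and the transfer of the contractive estimate are carried out correctly, and the uniqueness argument under injectivity of $T$ is fine. The paper instead passes to the quotient $\tilde{X}$ of $X$ by the relation $Tx=Ty$, equips it with the metric $d(\tilde{x},\tilde{y})=\nor{Tx-Ty}$, and verifies hypotheses $(C_1)$--$(C_5)$ of its own Theorem~\ref{thm:Gen.Theorem}: there the monotonicity of $\alpha$ is used to show that $\tilde{T}-\tilde{S}$ is $\varphi$-expansive with $\varphi(t)=(1-\alpha(t))t\in\mathcal{F}$, and the defining property of $\boldsymbol{\mathcal{S}}$ is used to establish the Leray--Schauder condition $(C_5)$ by contradiction. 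Your route is shorter and, notably, never uses the hypothesis that $\alpha$ is decreasing --- Geraghty's theorem needs only $\alpha\in\boldsymbol{\mathcal{S}}$ --- so it in fact proves a marginally stronger statement; on the other hand it relies on an external fixed point theorem, whereas the paper's route is self-contained within its coincidence framework and, by establishing $\varphi$-expansiveness, feeds directly into Proposition~\ref{prop:Ulam.Hyers.stability} on generalized Ulam--Hyers stability.
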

\begin{proof}
Let us consider the equivalence relation $\er$ in $X$ defined by $x\er y$ if, and only if, $Tx=Ty$. Given $x\in X$, we denote $\tilde{x}:=\{y\in X: y\er x\}$ and consider the set
$$\tilde{X}:=\Big\{\,\tilde{x}: \ x\in X\Big\}, $$
on $\tilde{X}$ we can  define the metric $d:\tilde{X}\times\tilde{X}\to\R_+$ given by $d(\tilde{x},\tilde{y})=\nor{Tx-Ty}$ where $x\in\tilde{x}$ and $y\in\tilde{y}$. By definition of $\er$, we have that the mapping $\tilde{T}:\tilde{X}\to Y$ given by $\tilde{T}\tilde{x}=Tx$ is well-defined. By~\eqref{eq:Contractive.Version}, we infer that the mapping $\tilde{S}:\tilde{X}\to Y$ given by $\tilde{S}\tilde{x}=Sx$ is also well-defined. Indeed, if $y_1,y_2\in\tilde{x}$, by definition of $\er$, $Ty_1=Ty_2$. Then, $\nor{Sy_1-Sy_2}\leq\nor{Ty_1-Ty_2}=0$, that is, $Sy_1=Sy_2$.
\par\medskip
Now we will show that $\tilde{T}$ and $\tilde{S}$ satisfy the hypotheses of Theorem \ref{thm:Gen.Theorem}. Since $T$ is onto,  $Y$ is a Banach space and $0\leq\alpha(t)<1$ for all $t\geq0$, clearly  conditions $(C_1)-(C_2)$ are satisfied.  Moreover, by definition of $d$ and $\tilde{T}$, we have  that $\tilde{T}$ is an isometry. Hence, condition $(C_3)$  holds. Furthermore, for any $x,y\in X$,
\begin{equation*}
\begin{split}
\Vert(\tilde{T}-\tilde{S})\tilde{x}-(\tilde{T}-\tilde{S})\tilde{y}\Vert&=\nor{(T-S)x-(T-S)y}\\
	&\geq \nor{Tx-Ty}-\nor{Sx-Sy}	\\
	&\geq \nor{Tx-Ty}-\alpha\big(\nor{Tx-Ty})\,\nor{Tx-Ty}	\\
	&=\varphi(d\big(\tilde{x},\tilde{y}\big)),
\end{split}
\end{equation*}
that is, $\tilde{T}-\tilde{S}$ is $\varphi$-expansive with $\varphi(t)=(1-\alpha(t))t$, because $\alpha$ is decreasing, i.e., condition $(C_4)$ is satisfied.

Finally, let us prove condition~($C_5$). Since $T$ is onto, there exists $x_0\in X$ such that $\tilde{T}\tilde{x_0}=0$. By contradiction, assume that for each $n\in\N$ there exist $\lambda_n>1$ and $x_n\in X$, with $\nor{Tx_n}\geq n$, such that $Sx_n=\lambda_n\,Tx_n$. Then, using~\eqref{eq:Contractive.Version}, we have that $\nor{Sx_n-Sx_0}\leq\alpha\big(\nor{Tx_n}\big)\nor{Tx_n}$. By the triangle inequality, $\lambda_n\nor{Tx_n}\leq\alpha\big(\nor{Tx_n}\big)\nor{Tx_n}+\nor{Sx_0}$, that is,
$$1<\lambda_n\leq\alpha\big(\nor{Tx_n}\big)+\frac{\nor{Sx_0}}{\nor{Tx_n}}\leq\alpha\big(\nor{Tx_n}\big)+\frac{\nor{Sx_0}}{n}.$$
Taking limits as $n\to\infty$ and bearing in mind that $\alpha(t)<1$ for all $t\geq0$, we obtain that
\begin{equation}\label{ger}\lim_{n\to\infty}\alpha\big(\nor{Tx_n}\big)=1.\end{equation}
Now, (\ref{ger}) along with the fact $\alpha\in\boldsymbol{\mathcal{S}}$ imply that  $\nor{Tx_n}\to0$ as $n\to\infty$, which is a contradiction. Therefore, there exists $R>0$ such that if $\nor{Tx}\geq R$, then $Sx\neq\lambda Tx$ for all $\lambda>1$. That is,  $\tilde{T}$ and $\tilde{S}$ satisfy condition~($C_5$).
\par\medskip
  Since $\tilde{T}$ and $\tilde{S}$ fulfill the conditions of Theorem  \ref{thm:Gen.Theorem}, there exists a unique $\tilde{x^*}\in\tilde{X}$ such that $\tilde{T}\tilde{x^*}=\tilde{S}\tilde{x^*}$. Using the definition of $\tilde{T}$ and $\tilde{S}$, we conclude that $Tx^*=Sx^*$.
\par\medskip
Let us assume now that $T$ is injective. In this case, we only have to show that $\tilde{x^{*}}=\{x^*\}$. Note that if there exist two  points $x_1^*, x_2^*\in\tilde{x^*}$, then $T(x_1^*)=T(x_2^*)$ and the injectivity   of  $T$ yields that $x_1^*=x_2^*.$

\end{proof}

As a consequence of the above result we obtain Goebel's theorem in the setting of Banach spaces.

\begin{corollary}\label{cor:Coincidence.Contraction}
Let $A$ be an arbitrary nonempty set, $(N,\nor{\cdot})$ be a normed space and $T,S:A\to N$. Assume that $S(A)\subseteq T(A)$, $T(A)$ is a complete subspace of $N$ and there exists a constant $k\in[0,1)$ such that
\begin{equation*}
\nor{Sx-Sy}\leq k\nor{Tx-Ty}\quad\mbox{for all }x,y\in A.
\end{equation*}
Then, there exists at least one $x^*\in A$ such that $Tx^*=Sx^*$. If, in addition, $T$ is injective, then the coincidence point $x^*$ is unique.
\end{corollary}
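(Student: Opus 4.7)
The plan is to deduce this corollary as a direct application of Theorem~\ref{thm:Coincidence.Contraction}. The key observation is that although $N$ is only a normed space, the subspace $T(A)$ is complete by hypothesis, hence a Banach space with the induced norm. So first I would shrink the codomain: set $Y := T(A)$ and regard both $T$ and $S$ as mappings from $A$ into $Y$. This is legitimate for $S$ because the hypothesis $S(A)\subseteq T(A)$ is given, and for $T$ it has the additional benefit of making $T:A\to Y$ a surjection, matching the onto requirement in Theorem~\ref{thm:Coincidence.Contraction}.

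Next, I would pick the ``modulus function'' $\alpha:\R_+\to[0,1)$ to be the constant $\alpha(t):=k$, which lies in $[0,1)$ by assumption. Three properties then need to be verified: that $\alpha$ is decreasing, that $\alpha\in\boldsymbol{\mathcal{S}}$, and that the contractive inequality~\eqref{eq:Contractive.Version} holds. The first is immediate since constants are nonincreasing, and the third is just the hypothesis rewritten as $\nor{Sx-Sy}\leq k\nor{Tx-Ty}=\alpha(\nor{Tx-Ty})\nor{Tx-Ty}$. The membership $\alpha\in\boldsymbol{\mathcal{S}}$ is vacuous: since $\alpha(t_n)=k<1$ for every sequence $(t_n)$, the antecedent $\alpha(t_n)\to1$ of condition~\eqref{eq:cond.Geraghty} can never occur, so the implication holds trivially.

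With these checks in place, Theorem~\ref{thm:Coincidence.Contraction} applies and yields some $x^*\in A$ with $Tx^*=Sx^*$, giving the existence part. If $T$ is moreover injective, the uniqueness clause of the same theorem delivers uniqueness of $x^*$. I do not foresee any genuine obstacle; the only delicate point is ensuring that a constant choice of $\alpha$ is admissible within the Geraghty-type class $\boldsymbol{\mathcal{S}}$, which as noted above is handled vacuously.
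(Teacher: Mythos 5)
Your proof is correct and follows essentially the same route as the paper: restrict the codomain to the complete space $Y=T(A)$ so that $T$ becomes onto, and invoke Theorem~\ref{thm:Coincidence.Contraction} with a Geraghty function identically equal to $k$. The only cosmetic difference is that the paper redefines $\alpha(0)=1$ while you keep $\alpha\equiv k$; both choices are admissible, since membership in $\boldsymbol{\mathcal{S}}$ is vacuous either way and ``decreasing'' must be read as nonincreasing for the paper's own choice to qualify as well.
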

\begin{proof}
Just taking  $X=A$ and $Y=T(X)$ with the norm $\nor{\cdot}$. Since the  function $\alpha(t)=\left\{\begin{array}{ll}k, & t>0\\1,& t=0\end{array}\right.$ is decreasing and belongs to $\boldsymbol{\mathcal{S}}$, we deduce the result from Theorem~\ref{thm:Coincidence.Contraction}.
\end{proof}
%
%
\subsection{On the generalized Ulam-Hyers stability}
%
%
The stability problem of functional equations originated from a question of Ulam~\cite{Ul.64}, in 1940, concerning the stability of group homomorphisms. In the following year, Hyers~\cite{Hy.41} gave a first affirmative partial answer to the question of Ulam for Banach spaces. Thereafter, this type of stability is called the Ulam-Hyers stability.

\begin{definition}[\cite{Ru.09}]
Let $(X,d)$ be a semi-metric space and let $(Y,\nor{\cdot})$ be a Banach space. The coincidence problem~\eqref{e11} is called generalized Ulam-Hyers stable if and only if there exists $\psi : \mathbb{R}_+ \to \mathbb{R}_+$ increasing, continuous at $0$ and $\psi(0) = 0$ such that for every $\varepsilon > 0$ and for each solution $w^* \in X$ of the approximative coincidence problem
\begin{equation}\label{appcoinprob}
\nor{Tw^*-Sw^*} \leq \varepsilon
\end{equation}
there exists a solution $z^*$ of problem~\eqref{e11} such that
$$d(w^*,z^*)\leq \psi(\varepsilon).$$
If there exists $c > 0$ such that $\psi(t) = ct$ for each $t\in \mathbb{R}_+$ then the coincidence problem (\ref{e11}) is said to be Ulam-Hyers stable.
\end{definition}

\begin{proposition}\label{prop:Ulam.Hyers.stability}
Suppose that we are under the assumptions of Theorem~\ref{thm:Gen.Theorem} and additionally that the function $\varphi\in\mathcal{F}$ is strictly increasing and onto. Then, the coincidence problem~\eqref{e11} is generalized Ulam-Hyers stable.
\end{proposition}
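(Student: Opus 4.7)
The plan is to directly exploit the $\varphi$-expansivity of $T-S$ and invert $\varphi$. By Theorem~\ref{thm:Gen.Theorem}, under the stated assumptions there is a unique $z^\ast\in X$ with $T(z^\ast)=S(z^\ast)$, so the question reduces to controlling $d(w^\ast,z^\ast)$ in terms of $\varepsilon$ for any approximate solution $w^\ast$ of~\eqref{appcoinprob}.

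The key calculation is short. Applying condition~($C_4$) with $x=w^\ast$ and $y=z^\ast$, and using that $(T-S)(z^\ast)=0$, we obtain
$$
\varphi\!\left(d(w^\ast,z^\ast)\right)\;\leq\;\nor{(T-S)(w^\ast)-(T-S)(z^\ast)}\;=\;\nor{Tw^\ast-Sw^\ast}\;\leq\;\varepsilon.
$$
So everything boils down to inverting the inequality $\varphi(d(w^\ast,z^\ast))\leq\varepsilon$.

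This is where the additional hypothesis on $\varphi$ is used. Since $\varphi\in\mathcal{F}$ is strictly increasing and surjective from $\mathbb{R}_+$ onto $\mathbb{R}_+$, it is a bijection and admits an inverse $\varphi^{-1}:\mathbb{R}_+\to\mathbb{R}_+$ which is also strictly increasing and surjective. A standard fact about monotone surjections between intervals gives that both $\varphi$ and $\varphi^{-1}$ are automatically continuous; in particular $\varphi^{-1}(0)=0$ by~($P_1$), and $\varphi^{-1}$ is continuous at $0$. Set $\psi:=\varphi^{-1}$. Then $\psi$ satisfies the three requirements of the definition (increasing, continuous at $0$, $\psi(0)=0$), and applying $\psi$ to the inequality above yields
$$
d(w^\ast,z^\ast)\;\leq\;\varphi^{-1}(\varepsilon)\;=\;\psi(\varepsilon),
$$
which is precisely the generalized Ulam--Hyers stability.

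There is no serious obstacle here: the heart of the argument is the one-line estimate using $\varphi$-expansivity against the unique coincidence point, and the only care needed is to justify that $\varphi^{-1}$ is a legitimate choice of comparison function $\psi$, which follows automatically from $\varphi$ being a strictly increasing surjection of $\mathbb{R}_+$ together with property~($P_1$).
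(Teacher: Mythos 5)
Your proof is correct and follows essentially the same route as the paper: apply the $\varphi$-expansivity of $T-S$ against the unique coincidence point $z^\ast$ and invert $\varphi$ to get $d(w^\ast,z^\ast)\leq\varphi^{-1}(\varepsilon)$. If anything, you are slightly more careful than the paper in checking that $\psi=\varphi^{-1}$ genuinely satisfies the requirements of the definition (increasing, continuous at $0$, $\psi(0)=0$), a point the paper leaves implicit.
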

\begin{proof}
Let $\varepsilon > 0$ and $w^* \in X$ such that
$$\nor{T(w^*) - S(w^*)}\leq \varepsilon.$$
Taking $u \in X$ as the unique solution of the coincidence problem~\eqref{e11} we have
$$\varepsilon > \nor{Tw^*-Sw^*}=\nor{(T-S)w^*-(T-S)u}\geq\varphi\big(d(w^*,u)\big),$$
then $d(w^*,u)\leq \varphi^{-1}(\varepsilon)$.  Which means that  problem (\ref{e11}) is generalized Ulam-Hyers stable.
\end{proof}

\begin{remark}
Notice that if $T-S$ is expansive, that is, there exists a constant $c>0$ such that for all $x,y\in X$
$$\nor{(T-S)x-(T-S)y}\geq c\,d(x,y),$$
then the coincidence problem~\eqref{e11} is Ulam-Hyers stable.
\end{remark}
%
%
\section{Applications}
%
%
In this section we will show two applications of results from the previous section to Differential Equations and Fractional Differential Equations.
%
%
\subsection{An existence and uniqueness result to Problem \ref{p2}}
\label{sec:Dif.Eq.SINE}
%
%
We shall apply our coincidence point result (Theorem~\ref{thm:Gen.Theorem}) in order to study the existence of classical solutions for the following general differential equation with homogeneous Dirichlet condition
$$(P)\left\{
\begin{array}{l}
	A(u''(t))-\sin\big(u(t)\big)=g(t),\qquad\mbox{for }t\in[0,1]	\\[1em]
	u(0)=0,\quad u(1)=0,
\end{array}\right.$$
where the fixed function $g\in \mathcal{C}[0,1]$, and $A:\R\to\R$ is a certain known function.
\par\medskip

Let $Y=(\mathcal{C}[0,1],\nor{\cdot}_\infty)$ be the classical Banach space of the real continuous functions $u:[0,1]\to\R$ endowed with its usual norm $\nor{u}_\infty:=\max\{\abs{u(t)}:t\in[0,1]\}$. In the linear space $\mathcal{C}^2[0,1]:=\{u:[0,1]\to\R\;:\;u''\in\mathcal{C}[0,1]\}$ we introduce the following linear subspace
$$X:=\big\{u\in\mathcal{C}^2[0,1]:u(0)=u(1)=0\big\}.$$
Notice that $X$ endowed with the norm $\nor{u}_*:=\max\big\{\nor{u}_\infty,\nor{u'}_\infty,\nor{u''}_\infty\big\}$ is a Banach space. Using the Mean Value Theorem, one can prove, see \cite{GF-Ml.13}, that $\nor{u}_*=\nor{u''}_\infty$ for all $u\in X$.
\par\medskip

\begin{proposition}\label{pro:Dif.Eq}
With the previous notation, assume that $A$ satisfies the following two properties:
\begin{itemize}
\item[($A_1$)] $A$ is continuous;
\item[($A_2$)] there exists a function $f\in\mathcal{F}$ such that $f(\abs{Ax-Ay})\leq\abs{x-y}\leq\abs{Ax-Ay}$, for all $x,y\in\R$.
\end{itemize}
Then, problem ($P$) has a unique solution in $\mathcal{C}^2[0,1]$ and, moreover, ($P$) is generalized Ulam-Hyers stable.
\end{proposition}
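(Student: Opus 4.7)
The plan is to recast $(P)$ as a coincidence problem and apply Theorem~\ref{thm:Gen.Theorem}. With $Y := (\mathcal{C}[0,1],\nor{\cdot}_\infty)$ I would set $T(u)(t) := A(u''(t))$ and $S(u)(t) := \sin(u(t)) + g(t)$, so that $Tu = Su$ is exactly the equation in $(P)$. The key auxiliary tool is the Green-function estimate $\nor{u}_\infty \leq \tfrac{1}{8}\nor{u''}_\infty = \tfrac{1}{8}\nor{u}_*$ valid for every $u \in X$; combined with $(A_2)$ and the $1$-Lipschitz nature of $\sin$, it will drive every verification.

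For the structural hypotheses $(C_1)$--$(C_3)$: from $|Ax-Ay|\geq|x-y|$ and $(A_1)$, $A$ is an injective continuous map of $\R$ with $|A(x)|\to\infty$ as $|x|\to\infty$, hence a strictly monotone homeomorphism of $\R$. Given $h\in Y$, explicitly solving $u'' = A^{-1}\circ h$ under the Dirichlet conditions via $u(t)=\int_0^t(t-s)A^{-1}(h(s))\,ds - t\int_0^1(1-s)A^{-1}(h(s))\,ds$ produces $u\in X$ with $Tu=h$; thus $T(X)=Y$ is closed and convex and automatically $S(X)\subseteq T(X)$, while the chain $\nor{Su-Sv}_\infty\leq\nor{u-v}_\infty\leq\nor{u-v}_*\leq\nor{Tu-Tv}_\infty$ completes $(C_2)$. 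Endowing $X$ with the semi-metric $d(u,v):=\nor{u-v}_*$ and choosing $t^*$ where $|A(u''(\cdot))-A(v''(\cdot))|$ attains its maximum, $(A_2)$ at the points $u''(t^*)$, $v''(t^*)$ yields $f(\nor{Tu-Tv}_\infty)\leq|u''(t^*)-v''(t^*)|\leq d(u,v)$, which is $(C_3)$.

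The Poincaré-type bound is decisive for the remaining conditions. For $(C_4)$ the reverse triangle inequality gives $\nor{(T-S)u-(T-S)v}_\infty \geq \nor{Tu-Tv}_\infty - \nor{Su-Sv}_\infty \geq \nor{u-v}_* - \tfrac{1}{8}\nor{u-v}_* = \tfrac{7}{8}\,d(u,v)$, so $T-S$ is $\varphi$-expansive with $\varphi(r)=\tfrac{7r}{8}$. For $(C_5)$, taking $u_0=0$, if $Su-Tu_0=\lambda(Tu-Tu_0)$ with $\lambda>1$ the same chain of estimates forces $(\lambda-\tfrac{1}{8})\nor{Tu-Tu_0}_\infty\leq\nor{g-A(0)}_\infty$; since $\lambda>1$, any $R>\tfrac{8}{7}\nor{g-A(0)}_\infty$ does the job. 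Theorem~\ref{thm:Gen.Theorem} then produces the unique coincidence point $u^*\in X$, i.e., the unique classical solution of $(P)$, and Proposition~\ref{prop:Ulam.Hyers.stability} applied with the strictly increasing onto function $\varphi(r)=\tfrac{7r}{8}$ delivers the generalized Ulam-Hyers stability. The main obstacle will be the surjectivity of $T$: the expansivity $|Ax-Ay|\geq|x-y|$ alone does not immediately give $A(\R)=\R$, so one must carefully combine it with the continuity of $A$ and the growth estimate $|A(x)|\geq|x|-|A(0)|$ to upgrade injectivity to a homeomorphism of $\R$; everything else then follows routinely from the Poincaré bound and the $1$-Lipschitz behavior of $\sin$.
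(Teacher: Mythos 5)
Your proposal is correct, and while it follows the same overall skeleton as the paper (same $T$, $S$, same reduction to Theorem~\ref{thm:Gen.Theorem}), it diverges in the one step where the paper does real work: the $\varphi$-expansiveness of $T-S$. The paper proves $(C_4)$ by the product formula $\sin u-\sin v=2\sin\bigl(\tfrac{u-v}{2}\bigr)\cos\bigl(\tfrac{u+v}{2}\bigr)$ and a two-case analysis ($\nor{u-v}_*\leq\pi$ versus $\nor{u-v}_*>\pi$), arriving at the nonlinear modulus $\varphi(r)=r-2\sin(r/2)$ for $r\leq\pi$ and $r-2$ for $r>\pi$; crucially, it only uses $\nor{u-v}_\infty\leq\nor{u-v}_*$ (constant $1$), for which the sine term must be beaten by the concavity trick. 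You instead invoke the sharp Green's-function bound $\nor{u}_\infty\leq\tfrac18\nor{u''}_\infty$ for $u\in X$, so that $\nor{Su-Sv}_\infty\leq\tfrac18\nor{u-v}_*$ and the reverse triangle inequality immediately gives the linear modulus $\varphi(r)=\tfrac{7r}{8}$. This is both simpler and strictly stronger: by the Remark following Proposition~\ref{prop:Ulam.Hyers.stability}, a linear $\varphi$ yields genuine Ulam--Hyers stability with constant $\tfrac{8}{7}$, not merely the generalized version claimed. Your treatment of $(C_5)$ via the explicit bound $R>\tfrac{8}{7}\nor{g-A(0)}_\infty$ is a quantitative refinement of the paper's one-line observation that $S$ is bounded (both are valid). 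Finally, you are more careful than the paper on $(C_1)$ and on $S(X)\subseteq T(X)$: the paper never verifies surjectivity of $T$, whereas your argument that $(A_1)$ plus the expansivity $\abs{Ax-Ay}\geq\abs{x-y}$ forces $A$ to be a strictly monotone homeomorphism of $\R$ onto $\R$, followed by the explicit Dirichlet antiderivative formula, closes that gap correctly.
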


\begin{proof}
We define $T,S:X\to Y$ by
$$
T(u)(t)=A(u''(t))\qquad\mbox{and}\qquad
S(u)(t)=\sin\big(u(t)\big)+g(t).$$

It is immediate to show that $S$ is a nonexpansive mapping and that the Leray-Schauder condition holds because $S$ is bounded. Moreover, $T$ and $S$ satisfy~($C_2$) in Theorem~\ref{thm:Gen.Theorem}. Indeed, for each $u,v\in X$,  the expansiveness of $A$ yields
\begin{equation*}
\begin{split}
	\nor{S(u)-S(v)}_\infty&=\max_{t\in[0,1]}\abs{\sin(u(t))-\sin(v(t))}
		\leq\max_{t\in[0,1]}\abs{u(t)-v(t)}	\\
		&\leq\nor{u''-v''}_\infty
		=\max_{t\in[0,1]}\abs{u''(t)-v''(t)}	\\
		&\leq\max_{t\in[0,1]}\abs{A(u''(t))-A(v''(t))}	\\
		&=\nor{T(u)-T(v)}_\infty.
\end{split}
\end{equation*}

Now we shall prove that $T$ satisfies~($C_3$). Let $u,v\in X$. By~($A_2$), we have that
\begin{equation*}
\begin{split}
f(\abs{T(u)(t)-T(v)(t)})&=f(\abs{A(u''(t))-A(v''(t))})
 	 \leq \abs{u''(t)-v''(t)} \\
	&\leq \abs{A(u''(t))-A(v''(t))}\leq \nor{T(u)-T(v)}_\infty.\\
\end{split}
\end{equation*}
Then,
$$\max_{0\leq t\leq 1}f(\abs{T(u)(t)-T(v)(t)})\leq \nor{u-v}_*\leq \nor{T(u)-T(v)}_\infty.$$
Since $f$ is nondecreasing, we obtain that
\begin{equation}\label{eq:Dif.Eq:01}
f(\nor{T(u)-T(v)}_\infty)\leq \nor{u-v}_*\leq \nor{T(u)-T(v)}_\infty.
\end{equation}
Let us prove that $T-S$ is $\varphi$-expansive, where $\varphi:\R^+\to\R^+$, given by
$$\varphi(r):=\left\{
\begin{array}{ll}
	r-2\sin\big(\tfrac{r}{2}\big) & \mbox{if }\;0\leq r\leq\pi,	\\[.7em]
	r-2 						 & \mbox{if }\;r>\pi,	\\
\end{array}\right.$$
is onto and strictly increasing, and $\varphi\in\mathcal{F}$. Let $u,v\in X$.
\begin{enumerate}
\item[\textit{Case 1.}] If $\nor{u-v}_*\leq\pi$.  Notice that for each $t\in[0,1]$ we obtain that
\begin{equation*}
  \begin{split}
  	&\hspace{-3em}\nor{(T-S)(u)-(T-S)(v)}_\infty \\[.5em]
  	& \geq \abs{(T-S)(u)(t)-(T-S)(v)(t)} \\[.5em]
& = \abs{A(u''(t))-A(v''(t))-\big(\sin(u(t))-\sin(v(t))\big)} \\
& \geq \abs{A(u''(t))-A(v''(t))}-2\abs{\sin\big(\tfrac{u(t)-v(t)}{2}\big)\cos\big(\tfrac{u(t)+v(t)}{2}\big)} \\
& \geq \abs{u''(t)-v''(t)}-2\abs{\sin\big(\tfrac{u(t)-v(t)}{2}\big)}\,.
  \end{split}
\end{equation*}
By symmetry we can assume that $u(t) \geq v(t)$, then $\sin\big(\tfrac{u(t)-v(t)}{2}\big)\geq0$. In this case,
\begin{equation*}
  \begin{split}
  	\nor{(T-S)(u)-(T-S)(v)}_\infty & \geq \abs{u''(t)-v''(t)}-2\sin\big(\tfrac{u(t)-v(t)}{2}\big) \\
  	 & \geq \abs{u''(t)-v''(t)}-2\sin\big(\tfrac{\nor{u-v}_*}{2}\big)\,. \\
  \end{split}
\end{equation*}
because the function $r\mapsto\sin(r)$ is increasing in $[0,\tfrac{\pi}{2}]$. Then,
\begin{equation*}
  \begin{split}
  	\nor{(T-S)(u)-(T-S)(v)}_\infty & \geq \nor{u''-v''}_\infty - 2\sin\big(\tfrac{\nor{u-v}_*}{2}\big) \\
  	& = \nor{u-v}_* - 2\sin\big(\tfrac{\nor{u-v}_*}{2}\big) \\
  	& = \varphi(\nor{u - v}_*)\,.
  \end{split}
\end{equation*}

\item[\textit{Case 2.}] Suppose that $\nor{u-v}_*>\pi$. Using~\eqref{eq:Dif.Eq:01}, we get that
\begin{equation*}
  \begin{split}
  	\nor{(T-S)(u)-(T-S)(v)}_\infty & \geq \nor{T(u)-T(v)}_\infty - \nor{\sin(u)-\sin(v)}_\infty \\
  							  	   & \geq\nor{u-v}_*-2=\varphi(\nor{u-v}_*)\,.\\
  \end{split}
\end{equation*}
\end{enumerate}

By Theorem~\ref{thm:Gen.Theorem} there exists a unique $x\in X$ such that $S(x)=T(x)$, that is, $x$ is a unique solution of the problem~($P$). Furthermore, using Proposition~\ref{prop:Ulam.Hyers.stability} we have that problem~($P$) is generalized Ulam-Hyers stable.
\end{proof}
Notice that the assumption~($A_2$) in Proposition~\ref{pro:Dif.Eq} comes natural, because we can easily find functions satisfying~($A_2$). For example, given $k\in\R$ with $k\geq2$, the function $A:\R_+\to\R_+$ defined by
$$
Ax:=\left\{
\begin{array}{ll}
	2\sqrt{x} & \mbox{if }0\leq x\leq1, \\
	& \\
	k\,x	  & \mbox{if }x>1,
\end{array}
\right.
$$
satisfies property~($A_2$) with
$$f(t)=\min\left\{\frac{t^2}{4},\frac{t}{k}\right\}.$$
In order to show this, we shall distinguish three cases. Let $x,y\in\R_+$.
\begin{itemize}
\item[\textit{Case 1.}] Suppose that $0\leq y\leq x\leq 1$. Notice that $\abs{Ax-Ay}= 2\sqrt{x}-2\sqrt{y}\leq\frac{4}{k}$. Then,
\begin{equation*}
\begin{split}
f(\abs{Ax-Ay})&=\left(\sqrt{x}-\sqrt{y}\right)^2\leq x-y\\[.5em]
&\leq\frac{1}{\sqrt{\xi}}\,(x-y)=\abs{A'(\xi)}\,\abs{x-y}= \abs{Ax-Ay},
\end{split}
\end{equation*}
where $\xi\in(x,y)$.
\item[\textit{Case 2.}]If $0\leq y\leq 1<x$. In this case,
$$f(\abs{Ax-Ay})\leq\frac{\abs{Ax-Ay}}{k}\leq x-\sqrt{y}\leq x-y.$$
Moreover,
$$\abs{x-y}\leq\abs{Ax-Ay}\Longleftrightarrow 2\sqrt{y}-y\leq(k-1)x$$
which is true because $0\leq y\leq 1<x$ and $k\geq2$.
\item[\textit{Case 3.}] Assume that $1\leq y\leq x$.
$$f(\abs{Ax-Ay})\leq\frac{\abs{Ax-Ay}}{k}=\abs{x-y}\leq k\,\abs{x-y}=\abs{Ax-Ay}.$$
\end{itemize}
\begin{remark}
Note that property~($A_1$) in Proposition~\ref{pro:Dif.Eq} is necessary, because~($A_2$) does not imply the continuity of $A$. Indeed, just take $k>2$ in the previous example.
\end{remark}

Recently in \cite{GF-Ml.13}, it has been proved the existence and uniqueness of solutions for the following problem
$$(P_a)\left\{
\begin{array}{l}
	u''(t)-a^2\sin\big(u(t)\big)=f_0(t),\qquad\mbox{for }t\in[0,1]	\\[1em]
	u(0)=0,\quad u(1)=0,
\end{array}\right.$$
where the fixed function $f_0\in Y$ is called the driving force, and the constant $a\neq0$ depends on the length of the pendulum and on gravity. In their result, they required the hypothesis $\abs{a}<1$. We can apply the previous proposition in order to improve this result assuming that $\abs{a}\leq1$. Indeed, it is enough to take $A(r)=\frac{r}{a^2}$ and $g(t)=\frac{f_0(t)}{a^2}$.
\begin{remark}
The Ulam-Hyers stability can be used to get a region of localization of the solution of problem~($P$) although this solution becomes unknown. For instance, for the previous problem~($P_a$), with $\abs{a}=1$ and $f_0(t)=sin(\pi t)$ (see~\cite[Section~4.7]{Tri.85})
taking the following functions as initial datum
\begin{equation*}
\begin{split}
w_1(t)&=0\\
w_2(t)&=(t-1)\frac{t}{4}\\
w_3(t)&=-\frac{\sin(\pi\,t)}{\pi^2}\\
w_4(t)&=-\frac{\sin(\pi\,t)}{\pi^2}+\sin\big(\tfrac{\sin(\pi\,t)}{\pi^4}\big)\\
\end{split}
\end{equation*}
we obtain the computable results shown in Table~\ref{table1} and Figure~\ref{figure1} by using \textit{Mathematica$^\circledR$} version 9.0.
\end{remark}
\begin{table}[!ht]
\begin{center}
\begin{tabular}{|c|c|c|}
 \hline
 \rule[-1.5ex]{0pt}{4.5ex} Initial data & $\eps$ & $\psi(\eps)$ \\
 \hline
 \rule[-1.5ex]{0pt}{4.5ex} $w_1(t)$ & 1 & 2.994600778191 \\
 \hline
 \rule[-1.5ex]{0pt}{4.5ex} $w_2(t)$ & 0.5 & 2.342459305003 \\
 \hline
 \rule[-1.5ex]{0pt}{4.5ex} $w_3(t)$ & 0.1011479123607 & 1.354285018462 \\
 \hline
 \rule[-1.5ex]{0pt}{4.5ex} $w_4(t)$ & 0.0103862353036 & 0.630389524267 \\
 \hline
 \end{tabular}
\end{center}
\caption{Results of generalized Ulam-Hyers stability for problem~($P_a$) in the sense that if $\nor{Tw_i-Sw_i}_\infty\leq\eps$ then $\nor{w_i-x}_\infty\leq\psi(\eps)$, where $x$ is the unique solution of~($P_a$).
}\label{table1}
\end{table}
\begin{figure}[!ht]
\begin{center}
\includegraphics[scale=.4]{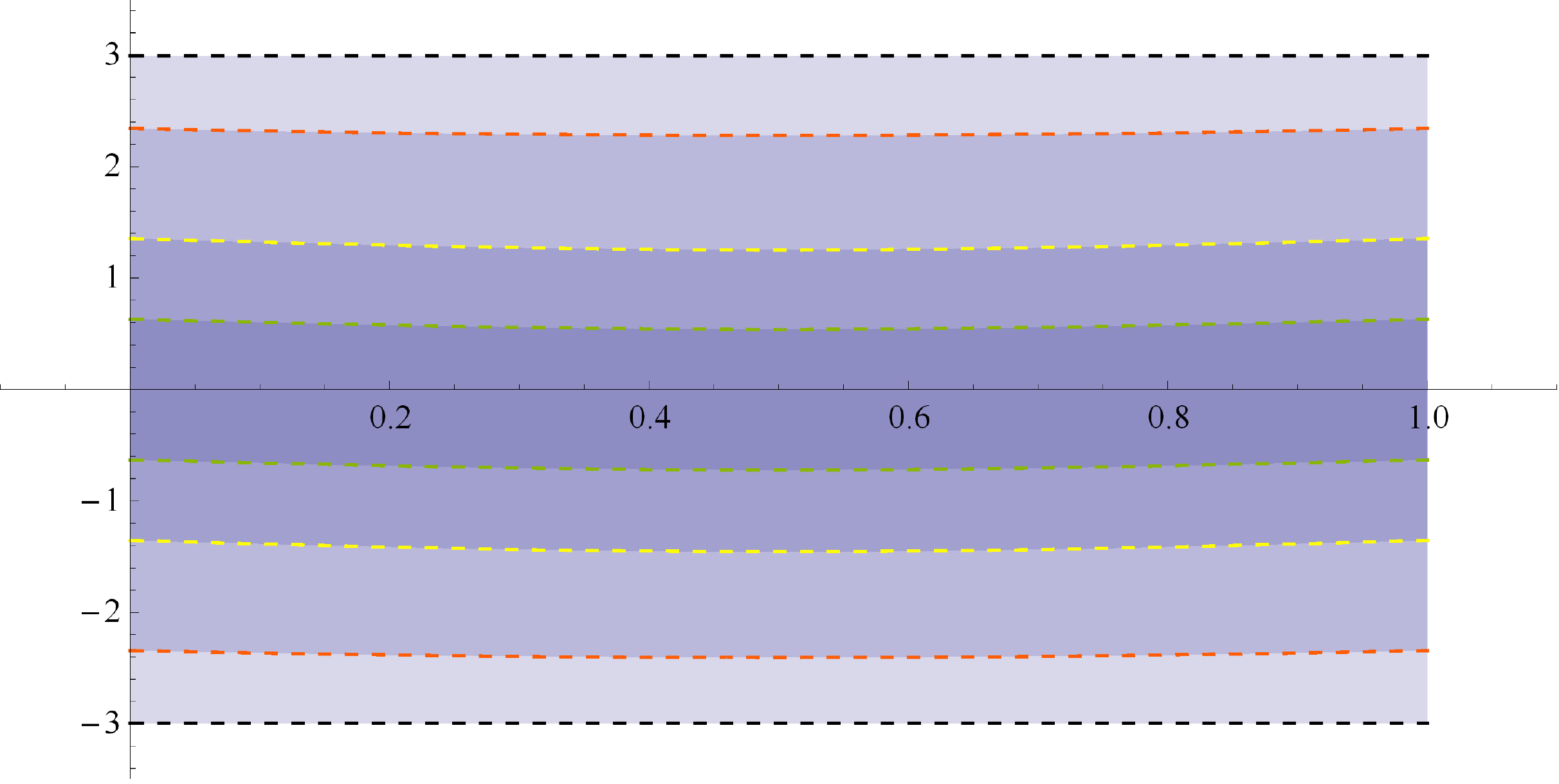}
\caption{Several regions of localization of the solution of problem~($P$) for the above initial datum $w_i$.}\label{figure1}
\end{center}
\end{figure}
%
%
\subsection{An application to Fractional Differential Equations}\label{sec:Fractional.Dif.Eq}
%
%
In this section we shall study the Cauchy problem~\eqref{eq:CP} with the nonlocal conditions for fractional differential equations of Caputo type
\begin{equation*}\label{eq:CP}\tag{CP}
\left\{
\begin{array}{l}
^cD^q x(t)=f(t,x(t))\qquad\mbox{in }\R_+, 	\\[1em]
x(0)=x_0+g(x),
\end{array}
\right.
\end{equation*}
where $f\in\mathcal{C}(\R_+\times\R)$, $0<q<1$, $^cD^qx$ is the fractional derivative of $x$ which is defined by
$$^cD^qx(t):=\frac{1}{\Gamma(1-q)}\int_0^t(t-s)^{-q}x'(s)\,ds,$$
where $\Gamma$ denotes to the Gamma function, $x_0\in\R$, and $g(x)$ is defined by
$$g(x)=\sum_{i=1}^Ng_i(x(t_i)),$$
where each $g_i:\R\to\R$ is $c_i$-lipschitziann, and $0<t_1<t_2<\cdots<t_N<\infty$. For instance, Deng~\cite{De.93} used this class of nonlocal condition with $g_i(x(t_i))=c_ix(t_i)$, for each $i=1,\ldots,N$, pointing out that, unlike the classical Cauchy problem with initial condition $x(0)=x_0$, one can obtain a better effect using the nonlocal condition $x(0)+g(x)=x_0$ in certain physical processes, for instance, in order to describe the diffusion phenomenon of a small amount in a transparent tube. In this case, the Cauchy problem allows the additional measurements at $t_i$, for $i=1,\ldots,N$. Recently, N'Gu\'{e}r\'{e}kata~\cite{NG.09} proved the existence and uniqueness of solutions to problem~(\ref{eq:CP}) on a bounded interval.
\par\medskip
On one hand, notice that $g$ is $L_g$-lipschitzian with $L_g=\sum_{i=1}^Nc_i$, since each $g_i$ is $c_i$-lipschitzian. On the other hand, the initial value problem~\eqref{eq:IVP} for fractional differential equations of Caputo type is a particular case of the Cauchy problem~\eqref{eq:CP}.
\begin{equation*}\label{eq:IVP}\tag{IVP}
\left\{
\begin{array}{l}
^cD^q x(t)=f(t,x(t))\qquad\mbox{in }\R_+, 	\\[1em]
x(0)=x_0.
\end{array}
\right.
\end{equation*}
Indeed, in this case, $g$ is the null function and thus,
for convenience, we may consider that $t_N=0$.
\par\medskip
Since $f$ is assumed continuous, \eqref{eq:CP} is equivalent to the following Volterra  integral equation.
\begin{equation}\label{eq:Volterra}
x(t)=x_0+g(x)+\frac{1}{\Gamma(q)}\int_0^t(t-s)^{q-1}f\big(s,x(s)\big)\,ds,
\qquad\mbox{for }t\geq0.
\end{equation}

Then, every solution of~\eqref{eq:Volterra} is also a solution of~\eqref{eq:CP} and vice versa. See~\cite[p.54]{La-Le-Va.09}, or~\cite[pp.78,86,103]{Di.04}, for its proof.

\begin{theorem}\label{thm:Existence.Uniqueness.CP}
Let $0<q<1$. Assume that $f:\R_+\times\R\to\R$ is a continuous function. If there exists a positive constant $L_f$ such that
$$\abs{f(s,u)-f(s,v)}\leq L_f\abs{u-v}\qquad\mbox{for all }u,v\in\R\mbox{ and a.e. }s\geq0,$$
then equation~\eqref{eq:Volterra} (and, therefore~\eqref{eq:CP}) has a unique solution in $\mathcal{C}\R_+$ whenever
$$
\frac{L_f}{\Gamma(q)}\left(\frac{t_N^q}{q}\right)+L_g<1.
$$
\end{theorem}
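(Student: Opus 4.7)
The plan is to identify~\eqref{eq:CP} with its equivalent Volterra integral equation~\eqref{eq:Volterra} and cast the problem as a coincidence problem $Tx=Sx$ with $T=\mathrm{Id}$, so that a single application of Corollary~\ref{cor:Coincidence.Contraction} (the Banach-space version of Goebel's theorem) yields both existence and uniqueness, provided the underlying Banach space is chosen appropriately.

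The key choice is the norm on $\mathcal{C}(\R_+)$. A pure Bielecki weight $e^{-\tau t}$ would damage the nonlocal term, since $\abs{g(x)-g(y)}\leq L_g\max_i\abs{x(t_i)-y(t_i)}$ would then be inflated by a factor $e^{\tau t_N}$. I therefore propose the \emph{clipped} Bielecki norm
$$\nor{x}_\tau:=\sup_{t\geq 0}e^{-\tau(t-t_N)^+}\abs{x(t)},\qquad (r)^+:=\max\{r,0\},$$
and set $X:=\{x\in\mathcal{C}(\R_+):\nor{x}_\tau<\infty\}$, which is a Banach space. The weight is identically $1$ on $[0,t_N]$, so that $\abs{g(x)-g(y)}\leq L_g\nor{x-y}_\tau$, and decays exponentially beyond $t_N$ to tame the fractional kernel.

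Writing $S$ for the right-hand side of~\eqref{eq:Volterra}, the core estimate is
$$\nor{S(x)-S(y)}_\tau\;\leq\;\left[L_g+\frac{L_f\,t_N^q}{q\,\Gamma(q)}+\frac{L_f}{\tau^q}\right]\nor{x-y}_\tau,$$
obtained by splitting the fractional integral at $t_N$. On $[0,t_N]$ the bound $\frac{L_f}{\Gamma(q)}\int_0^t(t-s)^{q-1}\,ds\leq\frac{L_f t_N^q}{q\Gamma(q)}$ suffices; for $t>t_N$ the same bound controls the portion $s\in[0,t_N]$ (since $e^{-\tau(t-t_N)}\leq 1$), while the portion $s\in[t_N,t]$ yields, via $u=t-s$ and $\int_0^\infty u^{q-1}e^{-\tau u}\,du=\Gamma(q)/\tau^q$, the decaying term $L_f/\tau^q$. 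By hypothesis $L_g+L_f t_N^q/(q\Gamma(q))<1$, so for $\tau$ sufficiently large the contraction constant is strictly less than one. After verifying that $S$ is also a self-map of $X$ (which follows from the same splitting together with the linear growth implied by Lipschitzness of $f$ in its second argument), Corollary~\ref{cor:Coincidence.Contraction} applied with $T=\mathrm{Id}$ (trivially injective and surjective, with $T(X)=X$ complete) delivers a unique $x^*\in X\subseteq\mathcal{C}(\R_+)$ with $x^*=S(x^*)$, i.e.\ the unique solution of~\eqref{eq:CP} inside $X$. For global uniqueness in the whole of $\mathcal{C}(\R_+)$ I would compare two solutions $x,y$: the same contraction argument restricted to $[0,t_N]$ forces $x\equiv y$ there (hence $g(x)=g(y)$), and then Bellman's inequality (from the preliminaries), applied on each compact $[t_N,T]$ after dominating the weakly singular factor $(t-s)^{q-1}$ by a locally integrable majorant, propagates $\abs{x-y}\equiv 0$ to all of $\R_+$.

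The main obstacle is the design of the weight, which must be flat on $[0,t_N]$ to preserve the Lipschitz constant $L_g$ of the nonlocal term while decaying quickly past $t_N$ to absorb the singular kernel $(t-s)^{q-1}$ into an $O(\tau^{-q})$ correction. The clipped Bielecki norm $\nor{\cdot}_\tau$ is tailored precisely to the sharp threshold $L_g+L_f t_N^q/(q\Gamma(q))<1$, and the matching use of Bellman's inequality is what turns a local (in $X$) uniqueness statement into uniqueness within all of $\mathcal{C}(\R_+)$.
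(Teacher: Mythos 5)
Your construction is, up to cosmetic reformulation, the paper's own: the clipped weight $e^{-\tau(t-t_N)^+}$ is exactly the paper's $1/\omega_\lambda$ with $\tau=\lambda L_f$ (up to the constant factor $e^{\lambda L_f t_N}$), the splitting of the fractional integral at $t_N$ and the resulting contraction constant $L_g+\tfrac{L_f t_N^q}{q\,\Gamma(q)}+\tfrac{L_f}{\tau^q}$ coincide with the paper's estimate, and both arguments close the existence part via Corollary~\ref{cor:Coincidence.Contraction}; your choice $T=\mathrm{Id}$ on the weighted space versus the paper's $T(u)=(u-x_0)/\omega_\lambda$ into $\mathcal{BC}(\R_+)$ is immaterial. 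Your first uniqueness step (the contraction estimate restricted to $[0,t_N]$ forces $x\equiv y$ there, hence $g(x)=g(y)$) is in fact cleaner than the paper's argument with the maximizing node $t_*$.

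The genuine gap is the final step, propagating $x\equiv y$ from $[0,t_N]$ to all of $\R_+$. You propose to apply Bellman's inequality ``after dominating the weakly singular factor $(t-s)^{q-1}$ by a locally integrable majorant.'' No such majorant exists: for fixed $s$ one has $\sup\{(t-s)^{q-1}:s<t\leq T\}=+\infty$, because the singularity sits at $s=t$ and moves with $t$; hence you cannot extract an inequality of the form $\abs{x(t)-y(t)}\leq\int_0^t k(s)\,\abs{x(s)-y(s)}\,ds$ with a $t$-independent $k\in L^1_{\mathrm{loc}}$, and Bellman's lemma as stated in the preliminaries does not apply to the weakly singular Gronwall inequality $\abs{x(t)-y(t)}\leq\tfrac{L_f}{\Gamma(q)}\int_0^t(t-s)^{q-1}\abs{x(s)-y(s)}\,ds$. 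This is precisely why the paper proves Lemma~\ref{lemma:u.0}: one first applies H\"older's inequality with exponents $p=\tfrac{q+1}{q}$ and $\hat p=q+1$ (so that $(t-s)^{(q-1)\hat p}$ remains integrable, since $(q-1)\hat p+1=q^2>0$), which converts the estimate into a standard Gronwall inequality for $\big(e^{-t}\abs{x(t)-y(t)}\big)^p$, and only then invokes Bellman. Either supply such a lemma, or replace this step by a continuation argument on short intervals $[T^*,T^*+\delta]$ with $\tfrac{L_f\delta^q}{q\,\Gamma(q)}<1$. (A minor issue shared with the paper: the claim that $S$ maps the weighted space into itself is not justified by linear growth alone, since $\int_0^t(t-s)^{q-1}\abs{f(s,0)}\,ds$ need not be controlled by the weight when $f(\cdot,0)$ is unbounded on $\R_+$.)
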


In order to prove the uniqueness of solutions of problem~\eqref{eq:CP} we need the following result which is a consequence of Bellman's inequality.

\begin{lemma}\label{lemma:u.0}
Let $u:[0,T]\to\R_+$ be a continuous function such that
$$
u(t)\leq\alpha\,\int_0^t(t-s)^{q-1}u(s)\,ds\qquad\mbox{for all }t\in[0,T],
$$
where $\alpha,q>0$. Then, $u(t)=0$ for all $t\in[0,T]$
\end{lemma}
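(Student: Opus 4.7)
The plan is a short bootstrapping argument in the spirit of Picard iteration.

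First, observe that evaluating the hypothesis at $t=0$ gives $u(0)\leq 0$, so by nonnegativity $u(0)=0$. Hence the set $\{t\in[0,T]:u\equiv 0\text{ on }[0,t]\}$ is nonempty. Define $t_*$ to be its supremum; by continuity of $u$ we still have $u\equiv 0$ on $[0,t_*]$. The goal is to rule out $t_*<T$.

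Assume for contradiction that $t_*<T$, and choose $\delta\in(0,T-t_*]$ small enough that $\alpha\delta^q/q<1$. On the interval $[t_*,t_*+\delta]$, since $u$ already vanishes on $[0,t_*]$, the hypothesis reduces to $u(t)\leq\alpha\int_{t_*}^t(t-s)^{q-1}u(s)\,ds$. Bounding $u(s)$ by $M:=\sup_{s\in[t_*,t_*+\delta]}u(s)$ (which is finite and nonnegative by continuity) and integrating the kernel explicitly yields $u(t)\leq\alpha M(t-t_*)^q/q\leq(\alpha\delta^q/q)M$. Taking the supremum on the left forces $(1-\alpha\delta^q/q)M\leq 0$, and the strict inequality $\alpha\delta^q/q<1$ implies $M=0$. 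Hence $u\equiv 0$ on $[0,t_*+\delta]$, contradicting the definition of $t_*$. Therefore $t_*=T$ and $u\equiv 0$ on $[0,T]$.

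The only delicate point is that the cutoff $\delta$ can be chosen independently of $t_*$; this uniformity holds because $\int_{t_*}^{t_*+\delta}(t-s)^{q-1}\,ds=\delta^q/q$ depends only on the step length $\delta$, not on the starting point $t_*$. As an alternative route closer to the Bellman-type machinery recalled in the preliminaries, one can iterate the hypothesis, apply Fubini together with the Beta-function identity $\int_r^t(t-s)^{nq-1}(s-r)^{q-1}\,ds=(t-r)^{(n+1)q-1}B(nq,q)$, and conclude by induction that $u(t)\leq(\alpha\Gamma(q)T^q)^n\,(\max_{[0,T]}u)/\Gamma(nq+1)$; letting $n\to\infty$ and using the super-exponential growth of $\Gamma(nq+1)$ then forces $u\equiv 0$.
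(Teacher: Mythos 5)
Your argument is correct, but it is genuinely different from the one in the paper. The paper proves the lemma by applying H\"older's inequality with the conjugate exponents $p=\tfrac{q+1}{q}$ and $\hat p=q+1$ (chosen so that $(q-1)\hat p+1=q^2>0$ keeps the singular kernel integrable), which converts the hypothesis into the regular Gronwall-type inequality $\big(e^{-t}u(t)\big)^p\leq \alpha^pM(q)^p\int_0^t\big(e^{-s}u(s)\big)^p\,ds$, and then invokes the Bellman inequality recalled in the preliminaries. Your main route instead localizes: you show $u(0)=0$, take $t_*$ to be the right endpoint of the maximal interval of vanishing, and on a short window $[t_*,t_*+\delta]$ with $\alpha\delta^q/q<1$ you bound the sup $M$ by $(\alpha\delta^q/q)M$ and force $M=0$, contradicting maximality. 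This is more elementary and self-contained --- it needs neither H\"older, nor the Gamma function, nor Bellman's inequality --- and the explicit computation $\int_{t_*}^{t}(t-s)^{q-1}\,ds=(t-t_*)^q/q$ handles the singularity of the kernel directly; the paper's version has the advantage of reusing the Bellman machinery already set up for the rest of Section~5. Your secondary route (iterating the kernel via the Beta-function identity to get the bound $(\alpha\Gamma(q)T^q)^n\max u/\Gamma(nq+1)$) is the standard generalized-Gronwall argument and is also valid. All steps in your main argument check out, including the passage from $u\equiv0$ on $[0,t_*)$ to $[0,t_*]$ by continuity and the one-shot nature of the contradiction, which makes the uniformity remark about $\delta$ strictly unnecessary but harmless.
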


\begin{proof}
Consider $p=\frac{q+1}{q}>1$ and $\hat{p}=q+1$. Note that $\frac{1}{p}+\frac{1}{\hat{p}}=1$. By H\"{o}lder's inequality, for each $t\in(0,T]$,
\begin{equation*}
\begin{split}
u(t)&\leq \int_0^t\alpha\,(t-s)^{q-1}e^s\,e^{-s}u(s)\,ds\\
& \leq\alpha\left(\int_0^t(t-s)^{(q-1)\hat{p}}e^{\hat{p}s}\,ds\right)^{\frac{1}{\hat{p}}}\left(\int_0^t\big(e^{-s}u(s)\big)^p\,ds\right)^{\frac{1}{p}}\\
&<\alpha\,M(q)\,e^t\,\left(\int_0^t\big(e^{-s}u(s)\big)^p\,ds\right)^{\frac{1}{p}},
\end{split}
\end{equation*}
where $M(q):=\big(\Gamma(q^2)\,(q+1)^{-q^2}\big)^{\frac{1}{q+1}}$, because
\begin{equation*}
\begin{split}
\int_0^t(t-s)^{(q-1)\hat{p}}e^{\hat{p}s}\,ds
&=e^{\hat{p}t}\int_0^t r^{(q-1)\hat{p}}e^{-\hat{p}r}\,dr
=\frac{e^{\hat{p}t}}{\hat{p}}\int_0^{\hat{p}t}\big(\tfrac{z}{\hat{p}}\big)^{(q-1)\hat{p}}e^{-z}\,dz\\
&=\frac{e^{\hat{p}t}}{\hat{p}^{(q-1)\hat{p}+1}}\int_0^{\hat{p}t} z^{(q-1)\hat{p}}e^{-z}\,dz
 =e^{\hat{p}t}\,\hat{p}^{-q^2}\int_0^{\hat{p}t} z^{q^2-1}e^{-z}\,dz \\
&<e^{\hat{p}t}\,\hat{p}^{-q^2}\,\Gamma\big(q^2\big),
\end{split}
\end{equation*}
since $(q-1)\hat{p}+1=q^2$. Then,
$$
\big(e^{-t}u(t)\big)^p\leq\alpha^p\,M(q)^p\,\int_0^t\big(e^{-s}u(s)\big)^p\,ds\qquad\mbox{for all }t\in[0,T].
$$
By Bellman's inequality,
$$
\big(e^{-t}u(t)\big)^p\leq 0\cdot \mbox{Exp}\big(\alpha^p\,M(q)^p\,t\big)=0\qquad\mbox{for each }t\in[0,T].
$$
Therefore, $u(t)=0$ for all $t\in[0,T]$.
\end{proof}
Now, we can give the proof of Theorem~\ref{thm:Existence.Uniqueness.CP}.
\begin{proof}
Since $\lim_{\lambda\to\infty}(\lambda L_f)^q=+\infty$. We may take a $\lambda>0$  satisfying both $\dfrac{q}{L_f\,t_N}<\lambda$ and $\frac{L_f}{\Gamma(q)}\left(\frac{t_N^q}{q}+\frac{\Gamma(q)}{\;(\lambda L_f)^q}\right)+L_g<1$. Let us consider the set
$$X_\lambda:=\Big\{u\in\mathcal{C}(\R_+):\sup_{t\geq0}\frac{\abs{u(t)}}{\omega_{\lambda}(t)}<\infty\Big\},$$
where $\omega_{\lambda}:\R_+\to[1,\infty)$ is defined by
$$
\omega_\lambda(t):=\left\{
\begin{array}{ll}
e^{\lambda L_f t_N} & \mbox{if }0\leq t\leq t_N,	\\
&	\\
e^{\lambda L_f t} & \mbox{if }t\geq t_N,	\\
\end{array}\right.
$$
and the Banach space $Y=\mathcal{BC}(\R_+)$ of bounded continuous functions endowed with the norm
$\nor{u}_\infty:=\sup_{t\geq0}\abs{u(t)}$. We define the mappings $T,S:X_\lambda\to Y$ given by
$$
T(u)(t)=\frac{u(t)-x_0}{\omega_\lambda(t)}
$$
and
$$
S(u)(t)=\frac{1}{\omega_\lambda(t)\,\Gamma(q)}\int_0^t(t-s)^{q-1}f(s,u(s))\,ds+\frac{g(u)}{\omega_\lambda(t)}.
$$
Notice that~\eqref{eq:Volterra} can be written as a coincidence problem in the following form:
$$\mbox{find $u\in X_\lambda$ such that $T(u)=S(u)$.}$$
Let $u,v\in X_\lambda$. For each $t\geq0$, we have that
\begin{equation*}
\begin{split}
\abs{S(u)(t)-S(v)(t)}
&\leq\frac{1}{\omega_\lambda(t)\,\Gamma(q)}\int_0^t(t-s)^{q-1}\abs{f(s,u(s))-f(s,v(s))}\,ds \\[.5em]
&\quad+\frac{\abs{g(u)-g(v)}}{\omega_\lambda(t)}	\\
&\leq\frac{1}{\omega_\lambda(t)\,\Gamma(q)}\int_0^t(t-s)^{q-1}L_f\,\abs{u(s)-v(s)}\,ds \\[.5em]
&\quad+\frac{\sum_{i=1}^Nc_i\,\abs{u(t_i)-v(t_i)}}{\omega_\lambda(t)}	\\
&\leq\frac{L_f}{\omega_\lambda(t)\,\Gamma(q)}\int_0^t(t-s)^{q-1}\abs{u(s)-v(s)}\,ds \\[.5em]
&\quad+\sum_{i=1}^N c_i\frac{\abs{u(t_i)-v(t_i)}}{\omega_\lambda(t_i)},
\end{split}
\end{equation*}
because $\omega_\lambda(t)\geq\omega_\lambda(t_i)$ for all $t\geq0$. Hence,
\begin{equation*}
\begin{split}
\abs{S(u)(t)-S(v)(t)}&
\leq\frac{L_f}{\Gamma(q)}\int_0^t(t-s)^{q-1}\frac{\omega_\lambda(s)}{\omega_\lambda(t)}\,ds\,\nor{T(u)-T(v)}_\infty \\[.5em]
&\quad+L_g\,\nor{T(u)-T(v)}_\infty\,.
\end{split}
\end{equation*}
Now we shall prove that, for every $t\geq0$,
$$\int_0^t(t-s)^{q-1}\frac{\omega_\lambda(s)}{\omega_\lambda(t)} ds\leq
\frac{t_N^q}{q}+\frac{\Gamma(q)}{\;(\lambda L_f)^q}\,.
$$
In order to do this, we consider two cases.
\begin{itemize}
\item[\textit{Case 1.}] If $0\leq t\leq t_N$. Then, $\dfrac{\omega_\lambda(s)}{\omega_\lambda(t)}=1$ for all $0\leq s\leq t$. Hence,
$$
\int_0^t(t-s)^{q-1}\frac{\omega_\lambda(s)}{\omega_\lambda(t)} ds=
\int_0^t(t-s)^{q-1} ds=\frac{t^q}{q}\leq\frac{t_N^q}{q}.
$$
\item[\textit{Case 2.}] If $t\geq t_N$,
\begin{equation*}
\begin{split}
\int_0^t(t-s)^{q-1}&\frac{\omega_\lambda(s)}{\omega_\lambda(t)} ds\\
  &=\int_0^{t_N}(t-s)^{q-1}\frac{\omega_\lambda(s)}{\omega_\lambda(t)} ds+
	\int_{t_N}^t(t-s)^{q-1}\frac{\omega_\lambda(s)}{\omega_\lambda(t)} ds	\\
&=\int_0^{t_N}(t-s)^{q-1}e^{-\lambda L_f(t-t_N)} ds+
  \int_{t_N}^t(t-s)^{q-1}e^{-\lambda L_f(t-s)} ds	\\
&\leq
  e^{-\lambda L_f(t-t_N)} \frac{t^q-(t-t_N)^q}{q}+
  \frac{\Gamma(q)}{\;(\lambda L_f)^q}	\\
\end{split}
\end{equation*}
because
\begin{equation*}
\begin{split}
\int_{t_N}^t(t-s)^{q-1}e^{-\lambda L_f(t-s)} ds&=
\int_0^{t-t_N}x^{q-1}e^{-\lambda L_f x} dx\\
&\leq
\int_0^\infty x^{q-1}e^{-\lambda L_f x} dx=
\frac{\Gamma(q)}{\;(\lambda L_f)^q}\,.
\end{split}
\end{equation*}
Moreover, for each $t\geq t_N$ we have that
\begin{equation*}
 \begin{split}
   \frac{t^q-(t-t_N)^q}{q}\, e^{-\lambda L_f(t-t_N)} &\leq
   \frac{t^q}{q}\, e^{-\lambda L_f(t-t_N)} \leq
   \frac{1}{q}\,\sup_{s\geq t_N} s^q\,e^{-\lambda L_f(s-t_N)}
   =\frac{t_N^q}{q}
 \end{split}
\end{equation*}
because $\dfrac{q}{L_f\,t_N}<\lambda$. Then,
$$
\int_0^t(t-s)^{q-1}\frac{\omega_\lambda(s)}{\omega_\lambda(t)} ds
\leq\frac{t_N^q}{q}+\frac{\Gamma(q)}{\;(\lambda L_f)^q}\,.
$$
\end{itemize}
Therefore, for all $t\geq0$,
$$
\abs{S(u)(t)-S(v)(t)}\leq
\left[\frac{L_f}{\Gamma(q)}\left(\frac{t_N^q}{q}+\frac{\Gamma(q)}{\;(\lambda L_f)^q}\right)+L_g\right]
\nor{T(u)-T(v)}_\infty\,.
$$
Then,
$$\nor{S(u)-S(v)}_\infty\leq
\left[\frac{L_f}{\Gamma(q)}\left(\frac{t_N^q}{q}+\frac{\Gamma(q)}{\;(\lambda L_f)^q}\right)+L_g\right]
\,\nor{T(u)-T(v)}_\infty.$$
Applying Corollary~\ref{cor:Coincidence.Contraction}, we deduce that there exists a unique $u\in X_\lambda$ such that  $S(u)=T(u)$, that is, problem~\eqref{eq:IVP} has at least one solution in $X_\lambda$, in particular, in $\mathcal{C}(\R_+)$.
\par\medskip
Now, we prove the uniqueness of solutions of problem~\eqref{eq:CP} in $\mathcal{C}(\R_+)$. Assume that there exist two solutions $x$ and $y$ of problem~\eqref{eq:CP} in $\mathcal{C}(\R_+)$. Let us see that $x(t)=y(t)$ for all $t\geq0$. Notice that, for each $t\geq0$,
\begin{equation*}
\begin{split}
\abs{x(t)-y(t)}&\leq \abs{g(x)-g(y)}+\int_0^t (t-s)^{q-1}\,\abs{f(s,x(s))-f(s,y(s))}\,ds\\
&\leq\sum_{i=1}^N c_i\abs{x(t_i)-y(t_i)}+\int_0^t (t-s)^{q-1}\,L_f\,\abs{x(s)-y(s)}\,ds.\\
\end{split}
\end{equation*}
Let $t_*\in\{t_1,\ldots,t_N\}$ be such that
$$\abs{x(t_*)-y(t_*)}=\max_{1\leq i\leq N}\abs{x(t_i)-y(t_i)}.$$
Then, for $t=t_*$, we have that
$$
\abs{x(t_*)-y(t_*)}\leq L_g\,\abs{x(t_*)-y(t_*)}+\int_0^{t_*} (t_*-s)^{q-1}\,L_f\,\abs{x(s)-y(s)}\,ds.
$$
Since $0\leq L_g<1$, there exists $\rho\in(L_g,1)$. Hence,
$$
\abs{x(t_*)-y(t_*)}<\int_0^{t_*} k(s)\,\abs{x(s)-y(s)}\,ds,
$$
where $k(s)=(t_*-s)^{q-1}\,\dfrac{L_f}{1-\rho}$. By continuity, there exists $\delta>0$ such that for all $t\in[t_*-\delta,t_*+\delta]$
$$
\abs{x(t)-y(t)}\leq\int_0^{t} k(s)\,\abs{x(s)-y(s)}\,ds.
$$
Using Bellman's inequality we deduce that for all $t\in[t_*-\delta,t_*+\delta]$
$$
\abs{x(t)-y(t)}\leq0\,e^{\int_0^{t} k(s)\,ds}=0.
$$
In particular, for $t=t_*$ we obtain $x(t_*)=y(t_*)$. By definition of $t_*$, we get that $x(t_i)=y(t_i)$ for all $i=1,\ldots,N$. Then, for all $t\geq0$,
$$
\abs{x(t)-y(t)}\leq\int_0^t (t-s)^{q-1}\,L_f\,\abs{x(s)-y(s)}\,ds.
$$
Fix $T>0$. By Lemma~\ref{lemma:u.0}, we have that $x(t)=y(t)$ for all $t\in[0,T]$. Taking $T\to\infty$, we deduce that $x=y$.
\end{proof}
As a consequence of Theorem~\ref{thm:Existence.Uniqueness.CP}, we get the existence and uniqueness of solutions of problem~\eqref{eq:IVP} in $\mathcal{C}(\R_+)$.
\begin{corollary}
If $f:\R_+\times\R\to\R$ is a continuous function and lipschitzian with respect to the second variable, then
 problem~\eqref{eq:IVP} has a unique solution in $\mathcal{C}(\R_+)$.
\end{corollary}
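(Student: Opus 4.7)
The plan is to derive this corollary as a degenerate case of Theorem~\ref{thm:Existence.Uniqueness.CP}, since \eqref{eq:IVP} is nothing more than \eqref{eq:CP} with the nonlocal perturbation switched off. The entire machinery developed for the nonlocal Cauchy problem is therefore available, and the whole task reduces to checking that all the hypotheses remain satisfied after specialisation.

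Concretely, I would take $g_i \equiv 0$ for every $i$, so that the Lipschitz constants $c_i$ can be chosen equal to $0$ and hence $L_g = \sum_{i=1}^{N} c_i = 0$; following the convention announced just before the statement of \eqref{eq:IVP}, I would also set $t_N = 0$, so that $t_N^{q} = 0$. The smallness assumption of Theorem~\ref{thm:Existence.Uniqueness.CP},
$$\frac{L_f}{\Gamma(q)}\left(\frac{t_N^{q}}{q}\right) + L_g < 1,$$
then collapses to $0 < 1$, which is trivially true regardless of the Lipschitz constant $L_f$ and the order $q \in (0,1)$. A direct application of the theorem produces a unique solution in $\mathcal{C}(\R_+)$ of the Volterra equation~\eqref{eq:Volterra}, which, by the equivalence recalled just before Theorem~\ref{thm:Existence.Uniqueness.CP}, is exactly the unique solution of~\eqref{eq:IVP}.

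No genuine obstacle arises; the only point worth a sanity check is that the internal construction used inside the proof of Theorem~\ref{thm:Existence.Uniqueness.CP}, which selects a parameter $\lambda > q/(L_f\,t_N)$, remains meaningful when $t_N = 0$. In that degenerate situation the lower bound on $\lambda$ is vacuous, and any sufficiently large $\lambda$ suffices: the weight $\omega_\lambda$ reduces to the classical Bielecki weight $\omega_\lambda(t)=e^{\lambda L_f t}$, the contribution $t_N^{q}/q$ in the contraction estimate disappears, and the residual term $L_f^{\,1-q}/\lambda^{q}$ can be made as small as desired by enlarging $\lambda$. Thus the contraction-type estimate on $S$ relative to $T$ still holds with constant strictly less than $1$, and Corollary~\ref{cor:Coincidence.Contraction} delivers the conclusion without any further argument.
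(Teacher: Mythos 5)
Your proposal is correct and follows exactly the route the paper intends: the corollary is stated there without proof as an immediate consequence of Theorem~\ref{thm:Existence.Uniqueness.CP} under the convention $g\equiv 0$, $L_g=0$, $t_N=0$, so that the smallness hypothesis degenerates to $0<1$. Your additional sanity check that the internal choice of $\lambda$ survives the degeneration (the first integral in Case~2 vanishes, the weight reduces to the Bielecki weight $e^{\lambda L_f t}$, and the contraction constant $L_f^{1-q}/\lambda^{q}$ can be made below $1$ by enlarging $\lambda$) is accurate and in fact fills in a detail the paper leaves implicit.
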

%
%

\section*{Acknowledgements} The research of the first author has been partially supported by MTM 2012-34847-C02-01 and P08-FQM-03453. The second author has been partially supported by MTM 2012-34847-C02-02.





\bibliographystyle{elsarticle-num}








\end{document}